\DeclareMathOperator{\tri}{tri}
\DeclareMathOperator{\can}{can}
\DeclareMathOperator{\Ima}{Im}
\DeclareMathOperator{\SO}{SO}
\newcommand{\RR}{\mathbb{R}}
\newtheorem{Theorem}{Theorem}
\newtheorem{Lemma}[Theorem]{Lemma}
\theoremstyle{definition}
\newtheorem{Definition}[Theorem]{Definition}
\theoremstyle{remark}
\newtheorem{Notation}[Theorem]{Notation}
\newcommand{\set}[2]{\left\{  #1  \ \middle| \ #2 \right\}  }
\renewcommand{\epsilon}{\varepsilon}
\numberwithin{Theorem}{section}
\numberwithin{equation}{section}
\begin{document}

        \title[Triangles Inscribed in Curves]{Inscribed triangles of Jordan curves in $\mathbb{R}^{n}$}
        \author{Aryaman Gupta}
        \address{Euler Circle, Mountain View, CA 94040}
        \email{aryamanjgupta@gmail.com}
        \author{Simon Rubinstein-Salzedo}
        \address{Euler Circle, Mountain View, CA 94040}
        \email{simon@eulercircle.com}
        \maketitle
        
    \begin{abstract}
           Nielsen's theorem states that any triangle can be inscribed in a planar Jordan curve. We prove a generalisation of this theorem, extending to any Jordan curve $J$ embedded in $\mathbb{R}^{n}$, for a restricted set of triangles. We then conclude by investigating a condition under which a given point of $J$ inscribes an equilateral triangle in particular.
    \end{abstract}
    
    \section{Introduction}
    
    A \emph{Jordan curve} is a continuous image of the unit interval in $\mathbb{R}^{n}$ that is injective everywhere except the endpoints, which are mapped to the same point. A polygon is \emph{inscribed} in a Jordan curve if the vertices of the polygon lie on the curve. There has been a considerable amount of interest surrounding the inscription of triangles and quadrilaterals in Jordan curves embedded in the plane. A lot of this interest stems from the Toeplitz square peg conjecture, which asks whether any Jordan curve in the plane has an inscribed square. Detailed exposition on this and similar problems can be found in~\cite{Matschke2014ASO} and~\cite{pak2010lectures}.

   In the literature, some of these variants have already been resolved. See, for example, ~\cite{meyerson1981balancing} and \cite{Nielsen1992}, wherein it is shown, respectively, that a planar Jordan curve necessarily inscribes a rectangle and any particular triangle. Yet, the original conjecture itself remains unproven, except under certain geometric or topological conditions. See, for instance,~\cite{stromquist_1989}, ~\cite{Matschke2014ASO},~\cite{10.2307/2370541} and \cite{Nielsen1995}.

    Here, instead of squares, we shall consider the inscription of triangles, under geometric conditions on the curve. The motivation for this paper comes from the following two results, proven respectively in in~\cite{Nielsen1992} and~\cite{Meyerson1980}.
    
    \begin{Theorem}[Nielsen] \label{Mot}
    Let $J \subset \mathbb{R}^{2}$ be a Jordan curve and let $\triangle$ be any triangle. Then infinitely many triangles similar to $\triangle$ can be inscribed in $J$.
    \end{Theorem}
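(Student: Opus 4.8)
The plan is to identify $\RR^2$ with $\mathbb{C}$ and to encode the shape of $\triangle$ by a single complex number. If $\triangle$ has vertices $v_1,v_2,v_3$, put $\lambda:=(v_3-v_1)/(v_2-v_1)$, so that $\Ima\lambda\neq 0$ by nondegeneracy; then a triple $(p,q,r)$ of distinct points spans a triangle similar to $\triangle$ (with the labelling $p\leftrightarrow v_1$, $q\leftrightarrow v_2$, $r\leftrightarrow v_3$ and a fixed orientation) exactly when $r=(1-\lambda)p+\lambda q$, and the opposite orientation corresponds to $\bar\lambda$. Since every similarity type of $\triangle$ corresponds to one of finitely many such values, it is enough to fix one $\lambda$ with $\Ima\lambda\neq0$ and produce infinitely many pairs $(p,q)\in J\times J$ with $p\neq q$ and $(1-\lambda)p+\lambda q\in J$. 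Writing $\psi_p(z):=p+\lambda(z-p)$ for the spiral similarity centred at $p$ (rotation by $\arg\lambda$, scaling by $|\lambda|$), the condition reads $\psi_p(q)\in J$; as $\psi_p(p)=p$, the Jordan curves $J$ and $\psi_p(J)$ always meet at $p$, and a nondegenerate inscribed triangle with designated vertex $p$ exists precisely when $J\cap\psi_p(J)\neq\{p\}$.

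First I would record an elementary topological fact: if Jordan curves $C_1,C_2$ share a point $p$ and $C_2$ has points both in $\operatorname{int}C_1$ and in $\operatorname{ext}C_1$, then $|C_1\cap C_2|\geq 2$, since otherwise $C_2\setminus\{p\}$ is a connected arc missing $C_1$ and so lies entirely on one side of $C_1$. Thus it suffices to find one $p\in J$ for which $\psi_p(J)$ meets both the inside and the outside of $J$. The cleanest way to organise this is by connectedness: if $J$ had no nondegenerate inscribed similar triangle of shape $\lambda$, then the map $g\colon(J\times J)\setminus\Delta\to\{+1,-1\}$ returning $+1$ or $-1$ according as $(1-\lambda)p+\lambda q$ lies outside or inside $J$ would be well-defined, continuous, and two-valued on the connected space $(J\times J)\setminus\Delta$ (the diagonal $\Delta$ is a nonseparating curve on the torus $J\times J$), hence constant; a contradiction then follows as soon as we exhibit one pair $(p,q)$ with $(1-\lambda)p+\lambda q$ outside $J$ and another with it inside.

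For the ``outside'' configuration I would take $p$ to be an exposed point of $\operatorname{conv}(J)$ in a direction $\theta$ that is generic, meaning not normal to an edge of the hull and not a corner direction, so that $p$ is the unique point of $J$ extreme in direction $\theta$ and has a one-point normal cone; all but countably many $\theta$ qualify, and when the hull happens to be a polygon one instead picks $\theta$ in the interior of some corner's (positive-length) normal cone. Because $\arg\lambda\not\equiv0\pmod{2\pi}$, the point $p$ is then not extreme in direction $\theta-\arg\lambda$, so some $q\in J$ is strictly farther than $p$ in that direction, and a one-line computation with the supporting line of $J$ at $p$ shows that $\psi_p(q)$ lands strictly on its far side, hence in $\operatorname{ext}J$. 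The ``inside'' configuration is where I expect the real work: one must make some $\psi_p(J)$ dip strictly into $\operatorname{int}J$, and the supporting-line trick does not transfer, because the local structure of $\operatorname{int}J$ at a boundary point --- its tangent cone --- can be as thin as a single ray (a cusp), so a spiral-similar copy issuing from $p$ need not return to the interior near $p$. I would try to force it either by an area argument, since the forward iterates of $\psi_p^{-1}$ (when $|\lambda|<1$), or of $\psi_p$ (when $|\lambda|>1$), strictly expand area and so cannot carry the bounded set $\operatorname{int}J$ into itself, or by a homotopy sliding $\lambda$ towards $1$, where $\psi_p$ degenerates to the identity and the inside configuration is immediate. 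This is the step I expect to be the main obstacle.

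Finally, to pass from one such triangle to infinitely many, I would run the construction over all admissible exposed directions: when $\operatorname{conv}(J)$ is not a polygon there is a continuum of generic $\theta$, the designated vertex of the resulting triangle is $p(\theta)$, and $\theta\mapsto p(\theta)$ is injective on generic directions, so one already obtains uncountably many distinct triangles similar to $\triangle$; the polygonal-hull case should reduce to this, for instance by applying the statement to the Jordan subcurve cut off by a side of an already-constructed inscribed triangle. Either way, $J$ inscribes infinitely many triangles similar to $\triangle$.
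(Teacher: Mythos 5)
This theorem is not proved in the paper at all --- it is quoted as motivation and attributed to \cite{Nielsen1992} --- so there is no in-paper argument to compare against; your proposal has to stand on its own, and as written it does not close.

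The sound parts are the reduction and the ``outside'' half: encoding the shape by $\lambda$ with $\operatorname{Im}\lambda\neq0$, reformulating inscription with designated vertex $p$ as $J\cap\psi_p(J)\supsetneq\{p\}$, the two-intersection fact from the Jordan curve theorem, the connectedness of $(J\times J)\setminus\Delta$, and the supporting-line computation showing $\psi_p(q)\in\operatorname{ext}J$ for a suitable exposed $p$ are all correct. The genuine gap is exactly the one you flag: producing a pair with $(1-\lambda)p+\lambda q\in\operatorname{int}J$. Neither sketched repair works as stated. The area argument only excludes the \emph{nested} configuration $\operatorname{int}J\subseteq\psi_p(\operatorname{int}J)$ (or its reverse); it says nothing about the configuration in which $\psi_p(J)\setminus\{p\}\subset\operatorname{ext}J$ with $\operatorname{int}J$ and $\psi_p(\operatorname{int}J)$ \emph{disjoint}, which is perfectly consistent with area expansion and is the case one actually has to kill. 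The homotopy to $\lambda=1$ is also not a base case: there $\psi_p=\operatorname{id}$ and $\psi_p(J)=J$ meets neither side, and for $\lambda$ near $1$ a copy of $J$ shrunk toward a boundary point $p$ need not enter $\operatorname{int}J$ at all (e.g.\ near a cusp), so ``the inside configuration is immediate'' is false. This inside step is the real content of Nielsen's theorem --- it is where one must actually use that $J$ bounds a region, not just that it has a convex hull --- and it is missing.

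The gap also propagates into the ``infinitely many'' step. Your multiplicity argument assumes that for each generic direction $\theta$ the exposed point $p(\theta)$ is itself the designated vertex of an inscribed triangle; but the connectedness argument, even if completed, only yields \emph{some} pair $(p,q)$ with $(1-\lambda)p+\lambda q\in J$, with no control over $p$. To anchor a triangle at $p(\theta)$ you need $\psi_{p(\theta)}(J)$ to meet $\operatorname{int}J$ for that specific point, i.e.\ the unproved inside configuration at every $p(\theta)$. The proposed reduction of the polygonal-hull case (``apply the statement to the Jordan subcurve cut off by a side'') is also not meaningful as stated, since such a subarc is not a Jordan curve. So the proposal is a reasonable plan with a correct skeleton, but the central step and the multiplicity step both remain open.
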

    
    \begin{Theorem}[Meyerson] \label{Mey}
    Let $J \subset \mathbb{R}^{2}$ be a Jordan curve. For every point $p \in J$ except at most two, there exists an inscribed equilateral triangle such that one of its vertices is $p$.
    \end{Theorem}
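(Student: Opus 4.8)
Here is how I would attack Theorem~\ref{Mey}.

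For $p\in J$ and an angle $\theta$, let $\rho_p^{\theta}$ be the rotation of $\mathbb{R}^2$ about $p$ by $\theta$. The whole approach rests on one elementary observation: if $q\in J\cap\rho_p^{60^\circ}(J)$ with $q\neq p$, write $q=\rho_p^{60^\circ}(r)$ for $r\in J$; then $|pq|=|pr|$ and $\angle rpq=60^\circ$, so $\{p,q,r\}$ is an isosceles triangle with apex angle $60^\circ$, hence equilateral, nondegenerate (since $q\neq p$ forces $r\neq p$ and $|pq|>0$), and inscribed in $J$ with vertex $p$; and symmetrically for $\rho_p^{-60^\circ}$. Call $p\in J$ \emph{exceptional} if $\rho_p^{60^\circ}(J)\cap J=\{p\}=\rho_p^{-60^\circ}(J)\cap J$. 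By the observation, every non-exceptional point of $J$ is the vertex of an inscribed equilateral triangle, so it suffices to prove that $J$ has at most two exceptional points.

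The second step is to pin down the geometry of an exceptional point. If $p$ is exceptional, then $\rho_p^{60^\circ}(J)\setminus\{p\}$ is connected and disjoint from $J$, hence lies in a single complementary component. It cannot lie in $\operatorname{int}(J)$: in that case $\operatorname{ext}(J)$, being connected, unbounded, and disjoint from $\rho_p^{60^\circ}(J)$, would be contained in $\operatorname{ext}(\rho_p^{60^\circ}(J))$; complementation (using that $J$ is nowhere dense) gives $\rho_p^{60^\circ}(\operatorname{int}J)=\operatorname{int}(\rho_p^{60^\circ}(J))\subseteq\operatorname{int}(J)$, and applying $\rho_p^{60^\circ}$ six times, with $(\rho_p^{60^\circ})^{6}=\mathrm{id}$, forces $\rho_p^{60^\circ}(\operatorname{int}J)=\operatorname{int}(J)$, hence $\rho_p^{60^\circ}(J)=J$; but then $J\setminus\{p\}=\rho_p^{60^\circ}(J)\setminus\{p\}\subseteq\operatorname{int}(J)$, absurd. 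Thus $\rho_p^{60^\circ}(J)\setminus\{p\}\subseteq\operatorname{ext}(J)$, and likewise for $\rho_p^{-60^\circ}$; the same connectedness-plus-iteration argument (applied with $\operatorname{int}(J)$ in place of $J$) also rules out $\operatorname{int}(J)\subseteq\operatorname{int}(\rho_p^{\pm60^\circ}(J))$, so the interiors are disjoint. Writing $K=\overline{\operatorname{int}(J)}$ (compact, connected, in fact a closed disc by Schoenflies), we conclude that every exceptional $p$ satisfies
\[
K\cap\rho_p^{60^\circ}(K)=\{p\}=K\cap\rho_p^{-60^\circ}(K).
\]

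The last step — bounding the number of points $p\in J$ obeying this relation — is where I expect the real work to be, and it is the main obstacle. The plan is to argue by contradiction from three such points $p_1,p_2,p_3$: since $K$ is connected they are not collinear, so they are the vertices of a genuine triangle, and one interior angle, say that at $p_1$, is at least $60^\circ$. For $r>0$ set $\Theta(r)=\{\arg(x-p_1):x\in K\cap\partial B(p_1,r)\}$; connectedness of $K$ (which contains $p_1$ and points at every distance from $p_1$ up to $\max_j|p_1p_j|$) makes $K\cap\partial B(p_1,r)$ nonempty on that range, while $K\cap\rho_{p_1}^{\pm60^\circ}(K)=\{p_1\}$ says exactly that $\Theta(r)$ is disjoint from $\Theta(r)\pm60^\circ$ for all $r>0$. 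I would then take a path $\gamma$ in $K$ from $p_2$ to $p_3$ avoiding $p_1$ (possible since $K$ is a closed disc and $p_1\in\partial K$), lift it to the universal cover of $\mathbb{R}^2\setminus\{p_1\}$ — on which $\rho_{p_1}^{60^\circ}$ acts by adding $60^\circ$ to the angular coordinate — and use an intermediate value / winding-number argument, coordinating the angular coordinate with the radial coordinate $|\gamma(\cdot)-p_1|$ and using that the angle at $p_1$ is $\ge 60^\circ$, to produce a radius $r$ and two points of $K\cap\partial B(p_1,r)$ subtending exactly $60^\circ$ at $p_1$ — that is, a point of $K\cap\rho_{p_1}^{60^\circ}(K)$ other than $p_1$, a contradiction. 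The delicate point, and the reason a crude angular estimate is insufficient, is that $K$ can wind arbitrarily many times around $p_1$ (thin spiral-shaped regions do exactly this), so the winding of the angular coordinate along $\gamma$ must be tracked carefully; the borderline case in which the angle at $p_1$ equals $60^\circ$ exactly while $|p_1p_2|\neq|p_1p_3|$ should be handled by a continuity argument rather than by direct substitution.

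Finally, the bound is sharp: the two sharp vertices of a sufficiently thin lune are both exceptional, so ``at most two'' cannot be improved.
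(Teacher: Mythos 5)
First, a point of reference: the paper does not prove Theorem~\ref{Mey} at all --- it is quoted from~\cite{Meyerson1980} as motivation --- so there is no in-paper proof to compare yours against; I can only assess your argument on its own terms. Your first two steps are sound. The reduction (a point $p$ fails to be a vertex of an inscribed equilateral triangle only if $J\cap\rho_p^{\pm60^\circ}(J)=\{p\}$) is correct and is the standard opening move, and your Jordan-curve argument upgrading this to $K\cap\rho_p^{\pm60^\circ}(K)=\{p\}$ for $K=\overline{\operatorname{int}(J)}$ --- via the ``six iterates of a $60^\circ$ rotation give the identity'' trick --- is a genuinely nice and, as far as I can see, complete piece of reasoning.

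The problem is that the entire content of Meyerson's theorem lives in your third step, and that step is a plan rather than a proof. Two concrete issues. First, ``since $K$ is connected they are not collinear'' is a non sequitur: three boundary points of a connected (even convex) region can perfectly well be collinear, so the degenerate case must be handled separately (it is likely easier, but you cannot dismiss it this way). Second, and more seriously, the promised ``intermediate value / winding-number argument'' producing two points of $K\cap\partial B(p_1,r)$ subtending exactly $60^\circ$ is exactly a horizontal-chord-type statement, and such statements are famously false for generic chord lengths (the universal chord theorem only guarantees chords of length $\ell/n$); the fact that the angle at $p_1$ is $\ge 60^\circ$ and that $\Theta(r)\cap(\Theta(r)\pm60^\circ)=\varnothing$ holds for the whole continuum, not just for one path, is presumably what rescues the argument, but you have not shown how, and you yourself flag the winding problem and the borderline case as unresolved. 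Until that chord-finding step is actually carried out, the proof establishes only the (correct and worthwhile) reduction, not the bound of two exceptional points. Your closing remark that a thin lune shows the bound is sharp is correct.
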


    There has also been a smaller, but still significant, amount of interest in higher dimensional variants; see for instance~\cite{stromquist_1989},~\cite{article},~\cite{karasev2009inscribing}, and~\cite{Nielsen1995}. There are at least two difficulties in dealing with Jordan curves in at least three dimensions. Firstly, many proofs regarding inscription for planar Jordan curves (including the proof of Theorem~\ref{Mot}) rely upon the property that $J$ divides the plane into two disconnected subsets. Since this does not, of course, generalise to higher dimensions, these proofs cannot be generalised in any obvious way. Secondly, since Jordan curves in higher dimensions are able to form knots, they can potentially be much more pathological than planar Jordan curves.

    The aim of this paper is to prove that, subject to a certain geometric restriction, any triangle can be inscribed in a given Jordan curve $J$ embedded in $\mathbb{R}^{n}$. 
    
    Before stating the main result, we introduce some notation.
    \begin{itemize}
    \item $J$ denotes a Jordan curve embedded in $\mathbb{R}^{n}$, defined by $\gamma: [0,1] \rightarrow \mathbb{R}^{n}$.
    
    \item Let $\delta \in (0,\frac{1}{2})$. Then, $\Theta_{\delta}: (0,\delta) \times (0,\delta) \rightarrow \mathbb{R}_{\geq0}$ denotes the function that maps each pair $(s,s') \in (0,\delta) \times (0,\delta)$ to the angle between $\vv{o\gamma(s)}$  and $\overrightarrow{o\gamma(s')}$.

    \item $\Theta'_{\delta}: (1 - \delta,1) \times (0,\delta) \rightarrow \mathbb{R}_{\geq0}$ denotes the function that maps each pair $(s,s') \in (1 - \delta,1) \times (0,\delta)$ to the angle between $\overrightarrow{o\gamma(s)}$ and $\overrightarrow{o\gamma(s')}$.

    \item $\triangle abc$ denotes the triangle in $\mathbb{R}^{n}$ with vertices $a,b,c \in J$.
    \end{itemize}

    Here is our main result.
    \begin{Theorem}\label{The1}
     Let $\theta_{v}$ be the angle of some vertex $v$ of $\triangle$. If there exists a $\theta_{v}$ such that \[{\limsup_{\delta\rightarrow0^{+}}\Theta_{\delta} < \theta_{v} < \liminf_{\delta\rightarrow0^{+}}\Theta'_{\delta}},\] then there exist two points $p,q \in J\setminus\{o\}$ such that $\triangle opq$ is similar to $\triangle$, with vertex $v$ corresponding to $o$.
    \end{Theorem}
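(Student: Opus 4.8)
The plan is to recast the problem as finding a common zero of two continuous functions on the square of parameters, and then to produce that zero by a connectedness argument fed by the hypothesis. First I would set up the reduction. Place $\triangle$ with $v$ at the origin, let $\theta_v$ be its angle at $v$, and let $\rho>0$ be the ratio of the lengths of the two sides of $\triangle$ at $v$. For $p,q\in J\setminus\{o\}$, the triangle $\triangle opq$ is similar to $\triangle$ with $v$ corresponding to $o$ precisely when $\angle poq=\theta_v$ and $|op|/|oq|=\rho$, since two triangles with an equal angle and the two sides at that angle in the same ratio are similar. Thus, writing $R(u):=|o\gamma(u)|$ and $\phi(u,w):=\angle(\overrightarrow{o\gamma(u)},\overrightarrow{o\gamma(w)})$, it suffices to find $(t,s)\in(0,1)^2$ with
\[
F_1(t,s):=\phi(t,s)-\theta_v=0\qquad\text{and}\qquad F_2(t,s):=R(t)-\rho R(s)=0 .
\]
On $(0,1)^2$ we have $\gamma(t),\gamma(s)\neq o$, so $R>0$ there and $F_1,F_2$ are continuous; since $o=\gamma(0)=\gamma(1)$ we also have $R(0)=R(1)=0$.

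Next I would extract consequences of the hypothesis. As $\delta\to 0^+$ the supremum of $\Theta_\delta$ is nonincreasing and the infimum of $\Theta'_\delta$ is nondecreasing, so the hypothesis yields a $\delta_0\in(0,\tfrac12)$ such that $\phi(t,s)<\theta_v$ whenever $t,s\in(0,\delta_0)$, and $\phi(t,s)>\theta_v$ whenever $t\in(1-\delta_0,1)$ and $s\in(0,\delta_0)$; by symmetry of $\phi$ the latter holds with $t$ and $s$ exchanged as well. In the closed square $[0,1]^2$ this says: $F_1<0$ at every point sufficiently near the corner $(0,0)$, and $F_1>0$ at every point sufficiently near either of $(0,1)$ and $(1,0)$. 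Nothing is claimed near $(1,1)$, and the argument will not use that corner.

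The heart of the proof is the zero set $Z:=F_2^{-1}(0)=\{(t,s)\in(0,1)^2:R(t)=\rho R(s)\}$, which is the $\rho$-level set of $\beta(t,s):=R(t)/R(s)$. Because $R(0)=R(1)=0$ and $R>0$ on $(0,1)$, we have $\beta\to 0$ as $t\to 0^+$ or $t\to 1^-$ and $\beta\to+\infty$ as $s\to 0^+$ or $s\to 1^-$, each uniformly while the other variable stays in a compact subset of $(0,1)$. I would then invoke a separation fact of the kind ubiquitous in the inscribed-figure literature: a level set of a function that blows down on one pair of opposite edges of the square and blows up on the other pair must contain a connected piece joining suitable corners. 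Concretely, take the connected component of the open set $\{\beta<\rho\}$ whose closure in $[0,1]^2$ contains the left edge $\{0\}\times(0,1)$ (such a component exists, e.g.\ it contains a strip $\{0<t<\eta(s)\}$ for a suitable positive continuous $\eta$), and analyse its frontier, which is forced to lie in $Z$; this should produce a connected component $K$ of $Z$ whose closure in $[0,1]^2$ contains $(0,0)$ and at least one of $(0,1)$, $(1,0)$. Making this precise is the step I expect to be the main obstacle; everything around it is bookkeeping.

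Granting it, the proof finishes at once: $K$ is connected and $F_1$ is continuous on it; at points of $K$ near $(0,0)$ both coordinates lie in $(0,\delta_0)$, so $F_1<0$ there, while at points of $K$ near $(0,1)$ or near $(1,0)$ we have $F_1>0$. Hence $F_1$ vanishes at some $(t_0,s_0)\in K$, and since $K\subseteq Z$ also $F_2(t_0,s_0)=0$; as $\theta_v>0$ forces $t_0\neq s_0$, the points $p:=\gamma(t_0)$ and $q:=\gamma(s_0)$ are distinct, lie in $J\setminus\{o\}$, and make $\triangle opq$ similar to $\triangle$ with $v$ corresponding to $o$. As an alternative to the topological step, one can run a winding-number computation for $F=(F_1,F_2)$ on a corner-truncated square: $F_2$ is negative on the left and right edges and positive on the top and bottom ones, so together with the corner signs of $F_1$ above the winding number of $F$ about the origin comes out nonzero irrespective of the uncontrolled behaviour near $(1,1)$, again forcing a zero.
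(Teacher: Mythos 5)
Your reduction is sound: SAS similarity makes $\triangle\, o\gamma(t)\gamma(s)\sim\triangle$ equivalent to the two scalar equations $F_1=F_2=0$ on $(0,1)^2$, and the hypothesis does yield a $\delta_0$ with $F_1<0$ near $(0,0)$ and $F_1>0$ near $(0,1)$ and $(1,0)$; this is genuinely different in shape from the paper's argument, which moves the sphere $S_t=\tri(o,\gamma(t))$ along the curve and plays the null-homotopy at the farthest parameter $t_1$ (Lemma~\ref{HomId}) against the non-null-homotopy at a parameter $t_2$ near $o$ (Lemma~\ref{NotHomId}), linked by Lemma~\ref{cont}. But there is a genuine gap exactly at the step you flag as ``the main obstacle'', and that step is the real content, not bookkeeping. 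You need a connected subset of $Z=F_2^{-1}(0)\subset(0,1)^2$ accumulating both at $(0,0)$ and at one of $(0,1),(1,0)$. The danger is the fourth corner: the hypothesis says nothing about pairs with both parameters near $1$ (both points approaching $o$ along the same end of $\gamma$), so neither the sign of $F_1$ nor that of $F_2$ is controlled near $(1,1)$, and connectivity of the zero set ``through'' $(1,1)$ is useless. A priori, the component of $Z$ accumulating at $(0,0)$ could otherwise accumulate only at $(1,1)$, with the mixed corners served by other components (corner points can bridge otherwise unrelated branches once closures in $[0,1]^2$ are taken). Ruling this out is a nontrivial piece of plane topology --- one needs something like a separating-component theorem together with the fact that disjoint continua joining opposite corner pairs of a square must meet (Janiszewski/unicoherence, or a Hex-type approximation argument) --- work comparable in weight to the paper's Sections 3--4, and none of it is carried out or even precisely formulated in your proposal.

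The fallback degree argument does not repair this. On any closed contour in $(0,1)^2$ bounding the region where you hope to find a zero, some arc must cross territory where you control neither sign: either the corner region near $(1,1)$, or the middle of the square (the sign of $F_2$ is known only near the four edges, not along a cut through the interior). On an arc where $F=(F_1,F_2)$ is unconstrained except for being nonzero, the image can wind around the origin arbitrarily, so the half-plane constraints on the rest of the boundary determine the winding number only up to an arbitrary integer; the claim that it ``comes out nonzero irrespective of the uncontrolled behaviour near $(1,1)$'' is therefore unjustified --- the uncontrolled arc can always cancel the rest. Note that the paper's proof is organised precisely so that this regime never arises: it never compares two points tending to $o$ along the same end of the curve, and its winding-number computation for $P\circ\gamma'_{t_2}$ only needs the angle hypothesis on the near-$o$ arc plus the metric fact that the remainder of the curve stays outside $B(o,d_{\min})$. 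To salvage your route you must either prove the level-set connectivity lemma in a form that explicitly avoids the $(1,1)$ bridge, or replace the two-variable degree by a one-variable argument with one point fixed --- which is essentially the paper's strategy.
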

    
    Note that the hypothesis of Theorem~\ref{The1} is automatically satisfied when $o$ is a smooth point.
    
    For the proof, our first concern is how we identify when $o,p,q \in J$ inscribe $\triangle$. Assuming $\triangle opq$ is similar to $\triangle$, let $r \colonequals \frac{\| q - o \|}{\| p - o \|}$ and $r' \colonequals \frac{\| q - p \|}{\| p - o \|}$, where $r \geq 1$ without loss of generality.

    We fix $o$ to be $\gamma(0)$ and assume that it is the origin of $\mathbb{R}^{n}$, and we let $p$ be any element of $J - \{o\}$. Then, the set $\tri(o,p)$ of points $q$ such that $\triangle opq$ is similar to $\triangle$ is an $(n-2)$-sphere. Formally,
    \[\tri(o, p) \colonequals \left\{q \in \mathbb{R}^{n}\,\middle|\,\frac{\| \overrightarrow{Q} \|}{\|\overrightarrow{P} \|} = r\right\} \cap \left\{q \in \mathbb{R}^{n}\,\middle|\,\frac{\| \overrightarrow{Q} - \overrightarrow{P} \|}{\| \overrightarrow{P} \|} = r'\right\},\]
    where $\overrightarrow{Q} \colonequals \overrightarrow{oq}$ and $\overrightarrow{P} \colonequals \overrightarrow{op}$. Since both sets of the intersection are $(n - 1)$-spheres centred at $o$ and $p$ respectively, $\tri(o,p)$ is an $(n-2)$-sphere. See Figure~\ref{tri} for a diagram that illustrates this.
    
     \begin{figure}
         \centering
         \resizebox{0.7\textwidth}{!}{\input{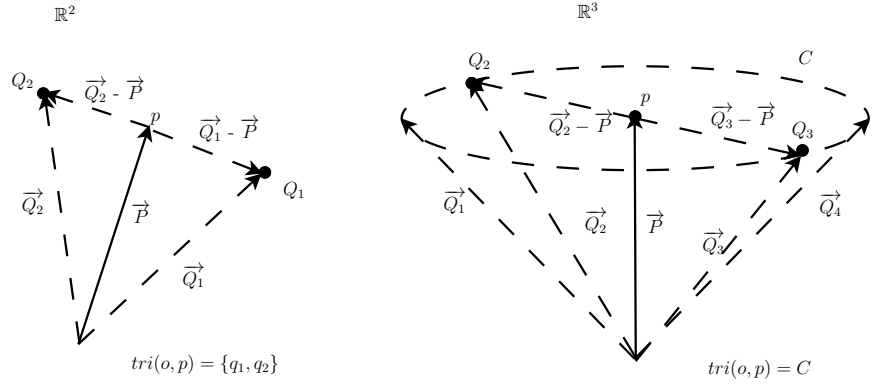}}
         \caption{The set $\tri(o,p)$ in $\mathbb{R}^{2}$ and $\mathbb{R}^{3}$ respectively. For the former space, this set is the 0-sphere $\{q_{1},q_{2}\}$. For the latter space, this set is a a 1-sphere.} 
         \label{tri}
     \end{figure}
    
    In order to prove Theorem~\ref{The1}, it suffices to show that there exists some $p \in J - \{o\}$ such that $J\cap\tri(o,p)\neq\varnothing$.
    
    Here is a more detailed outline of the path we take to prove our main result.
    
    \begin{itemize}
    \item \textbf{Setup:} In~\S\ref{I}, we introduce for $\tri(o,\gamma(t)) \colonequals S_{t}$, a scaled isometry $I_{t}$ such that $I_{t}(S_{t})$ is equal to the same $(n-2)$-sphere $S^{n-2}_{\can}$ for all $t \in (0,1)$. This isometry reorganises our coordinate system so that $S_{t}$ is mapped to a constant frame of reference for any $t \in (0,1)$. Since we consider the complement of $S_{t}$ in $\mathbb{R}^{n}$, it will be more convenient to reorganise our coordinate system to make $S_{t}$ appear 
    stationary, rather than considering a moving complement.
    In \S\ref{homlemmasection}, we assume that $\gamma$ and $S_{t}$ never intersect. Under this assumption, we prove that $I_{t}(\gamma) \simeq I_{t'}(\gamma)$ in $X \colonequals \mathbb{R}^{n} - S^{n-2}_{\can}$ for all $t,t' \in (0,1)$, where $\simeq$ denotes that two loops are freely homotopic in $X$.
    \item \textbf{Finding $t_{1}$:} In \S\ref{HomIdsection} we prove Lemma~\ref{HomId}, which states that $\gamma'_{t_{1}} \colonequals I_{t_{1}}(\gamma)$ is homotopic in $X$ to the trivial loop $c$ at $I_{t_{1}}(o)$ for some $t_{1} \in (0,1)$.
    \item \textbf{Finding $t_{2}$:} In \S\ref{NotHomIdsection}, we prove a series of technical lemmata (namely, Lemmata~\ref{Comp} to ~\ref{Flatnothom}) leading to a proof of Lemma~\ref{NotHomId}, which states that $\gamma'_{t_{2}}$ is not homotopic in $X$ to $c$ for some $t_{2} \in (0,1)$. We combine Lemmata~\ref{HomId} and~\ref{NotHomId} to show that $\gamma'_{t_{1}} \not\simeq \gamma'_{t_{2}}$ if $\gamma \cap S_{t} = \varnothing$ for every $t \in (0,1)$. However, from Lemma~\ref{cont}, we also know that $\gamma'_{t_{1}} \simeq \gamma'_{t_{2}}$ if $\gamma \cap S_{t} = \varnothing$ for every $t \in (0,1)$. Since both of these results follow from the same hypothesis---namely, that $\gamma \cap S_{t} = \varnothing$ for every $t \in (0,1)$---we know then that this hypothesis is false. We then prove Theorem~\ref{The1} and an additional corollary.
    \item \textbf{Inscribing equilateral triangles:} In \S\ref{last} we generalise Theorem~\ref{Mey} by showing that any point $o \in J$ inscribes an equilateral triangle if $J$ satisfies a certain condition.
    \end{itemize}
    
     See Figure~\ref{after} for diagrams outlining the sketch of the proof.
    \begin{figure}
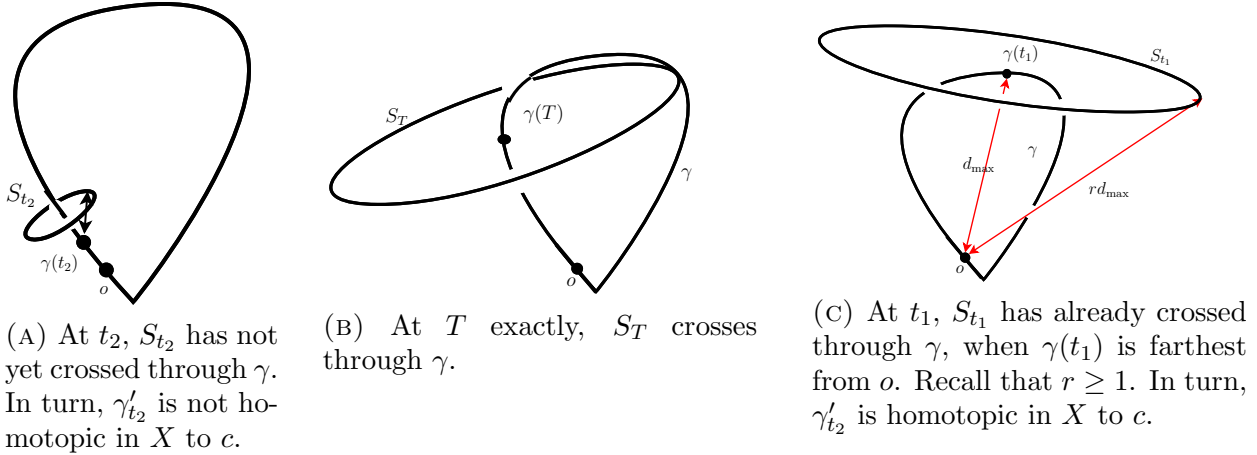

\begin{subfigure}{0.22\textwidth}
\resizebox{0.9\linewidth}{!}{\input{Diagram2prime.tex}} 
\caption{At $t_{2}$, $S_{t_{2}}$ has not yet crossed through $\gamma$. In turn, $\gamma'_{t_{2}}$ is not homotopic in $X$ to $c$.}
\label{before}
\end{subfigure}\hfill
\begin{subfigure}{0.35\textwidth}
\resizebox{0.9\linewidth}{!}{\input{Diagram1000prime.tex}} 
\caption{At $T$ exactly, $S_{T}$ crosses through $\gamma$.}\hfill
\end{subfigure}
\hfill
\begin{subfigure}{0.35\textwidth}
\resizebox{0.9\linewidth}{!}{\input{Diagram1000.tex}}
\caption{At $t_{1}$, $S_{t_{1}}$ has already crossed through $\gamma$, when $\gamma(t_{1})$ is farthest from $o$. Recall that $r \geq 1$. In turn, $\gamma'_{t_{2}}$ is homotopic in $X$ to $c$.}
\label{during}
\end{subfigure}
\caption{As $t$ increases from $t_{1}$ to $t_{2}$, and as $\gamma(t)$ goes farther away from $o$, $S_{T}$ intersects $\gamma$ for some $T \in (0,1)$.}
\label{after}
\end{figure}
     
 \section{Setup}\label{ConCon}
 \subsection{Translating $S_{t}$ to $S^{n-2}_{\can}$}\label{I}
 We show here that $S_{t}$ can always be mapped onto $S^{n-2}_{\can}$ by a scaled isometry $I_{t}$, where 
  $$S^{n-2}_{\can} = \set{(a_{1}, \dots, a_{n}) \in \mathbb{R}^{n}}{\sum_{i = 0}^{n-1} a_{i}^{2} = 1, a_{n} = 0}.$$
 The precise choice of $I_t$ is not important; we need only that $I_\bullet$ is a continuous map from $S^1$ to $\SO(n)\times\RR_{>0}$ such that $I_t(S_t)=S_{\can}^{n-2}$ for all $t$, but we provide a concrete description of one such family $I_\bullet$ nonetheless. We construct $I_{t}$ by composing a translation function $T_{t}:\mathbb{R}^{n} \rightarrow \mathbb{R}^{n}$, a rotation function $R_{t}:\mathbb{R}^{n} \rightarrow \mathbb{R}^{n}$ (whose center of rotation is $o$), and a scaling function $s_{t}:\mathbb{R}^{n} \rightarrow \mathbb{R}^{n}$. Here is how we construct each of the three functions.
 \begin{enumerate}
     \item $\mathbf{T_{t}}$: Let $o'$ be the centre of $S_{t}$. We set $T_{t}$ as the translation of $\mathbb{R}^{n}$ that maps $o'$ to $o$. 
     \item $\mathbf{R_{t}}$: Let $\Pi_{t}$ be the hyperplane that contains $S_{t}$, let $\mathbb{R}^{n}_{0} \colonequals \{(a_{1},a_{2},\dots,a_{n}) \in \mathbb{R}^{n} \mid a_{n} = 0\}$, and for any $(n - 1)$-plane $P \in \mathbb{R}^{n}$, let $N(P) = (\theta_{1}, \theta_{2}, \dots, \theta_{n-1}, 1)$
     denote the unit normal vector of that plane expressed by its $(n-1)$ spherical coordinates. Additionally, let us assume that $N(\mathbb{R}^{n}_{0}) = (0, 0, \dots, 0, 1)$, without a loss of generality, and let $N(\Pi_{t}) = (\phi_{1}, \phi_{2}, \dots, \phi_{n-1}, 1)$. We then set $R_{t}: \mathbb{R}^{n} \rightarrow \mathbb{R}^{n}$ to be the rotation of $\mathbb{R}^{n}$ about $o$ defined by the equation 
     $$R_{t}((\theta_{1}, \theta_{2}, \dots, \theta_{n-1}, r)) = (\theta_{1} - \phi_{1}, \theta_{2} - \phi_{2}, \dots, \theta_{n-1} - \phi_{n-1}, r).$$
     
     By this construction, $N \circ R_{t} \circ T_{t}(\Pi_{t}) = N(\mathbb{R}^{n}_{0})$. Thus, since every $(n - 1)$-plane passing through $o$ is uniquely identified by its unit normal vector, it follows that $R_{t} \circ T_{t}(S_{t}) = \mathbb{R}^{n}_{0}$ lies completely in $\mathbb{R}^{n}_{0}$, and is centred at $o$. Thus, $R_{t} \circ T_{t}(S_{t})$ is a nonzero scaling of $S^{n-2}_{\can}$.
     \item $\mathbf{s_{t}}$: We set $s_{t}$ as the scaling of $\mathbb{R}^{n}$ for which $s_{t} \circ (R_{t} \circ T_{t})(S_{t}) = I_{t}(S_{t}) = S^{n-2}_{\can}$.
 \end{enumerate}
 Figures~\ref{translation},~\ref{fig:rot}, and~\ref{fig:scaling} illustrate these transformations.
 
\begin{figure}
         \centering
         \resizebox{0.5\textwidth}{!}{\input{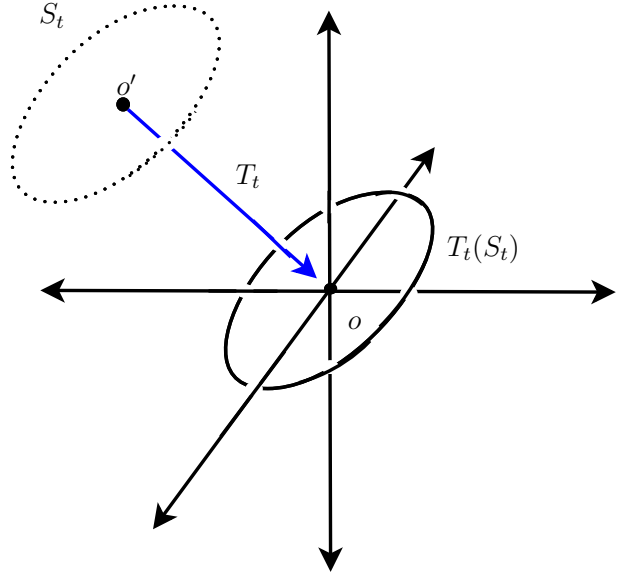}}
         \caption{The translation $T_{t}$ maps $o'$ to $o$.} \label{translation}
     \end{figure}

\begin{figure}
\begin{subfigure}{0.4\textwidth}
\resizebox{0.9\linewidth}{!}{\input{Diagram101.tex}} 
\caption{The rotation $R_{t}$ maps $\Pi_{t}$ to $\mathbb{R}^{n}_{0}$.}
\label{fig:rot}
\end{subfigure}\hfill
\begin{subfigure}{0.6\textwidth}
\resizebox{0.9\linewidth}{!}{\input{Diagram102.tex}} 
\caption{The scaling function $s_{t}$ maps $R_{t} \circ T_{t}(S_{t})$ to $S^{n-2}_{\can}$.}
\label{fig:scaling}
\end{subfigure}
\caption{}
\label{fig:transformations}
\end{figure}
     
  Note that whilst $I_{t}$ always maps $S_{t}$ to the same set $S^{n-2}_{\can}$, it also maps the loop $\gamma$ to a different loop $\gamma'_{t}:=I_t(\gamma)$ for each $t \in (0,1)$. 
 
 \subsection{A homotopy lemma}\label{homlemmasection}
 
 \begin{Lemma}\label{cont} 
 Suppose there is no $t \in (0,1)$ such that $\gamma\cap S_t\neq\varnothing$. Then for any $t,t'\in (0,1)$, we have $\gamma'_{t} \simeq \gamma'_{t'}$ in $X$.
 \end{Lemma}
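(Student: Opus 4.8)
The plan is to take the required homotopy to be the one-parameter family $\{\gamma'_\sigma\}$ itself, with $\sigma$ sweeping from $t$ to $t'$. Assume $t\le t'$ without loss of generality and set
\[
H\colon S^{1}\times[0,1]\longrightarrow\mathbb{R}^{n},\qquad H(u,\tau)=I_{(1-\tau)t+\tau t'}\bigl(\gamma(u)\bigr).
\]
First I would check that $H$ is continuous: $\gamma$ is continuous, the assignment $\sigma\mapsto I_\sigma$ is continuous on $(0,1)$ by the construction in \S\ref{I} (each of $T_\sigma$, $R_\sigma$, $s_\sigma$ varies continuously, since the hyperplane $\Pi_\sigma$ carrying $S_\sigma$ has normal direction along $\gamma(\sigma)$, the centre of $S_\sigma$ lies on the line through $o$ and $\gamma(\sigma)$, and the radius of $S_\sigma$ depends continuously on $\gamma(\sigma)$), and the action of a similarity of $\mathbb{R}^{n}$ on a point is jointly continuous in the two arguments. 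Since $\gamma(0)=\gamma(1)=o$, the map $H$ sends $\{0,1\}\times[0,1]$ onto the single arc $\tau\mapsto I_{(1-\tau)t+\tau t'}(o)$, so $H$ is a homotopy through loops, restricting to $\gamma'_{t}$ at $\tau=0$ and to $\gamma'_{t'}$ at $\tau=1$.

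The one substantive point is that $H$ must take values in $X=\mathbb{R}^{n}\setminus S^{n-2}_{\can}$. For fixed $\sigma\in[t,t']$, the map $I_\sigma$ is a bijection of $\mathbb{R}^{n}$ with $I_\sigma(S_\sigma)=S^{n-2}_{\can}$, so $I_\sigma(\gamma(u))\in S^{n-2}_{\can}$ if and only if $\gamma(u)\in S_\sigma=\tri(o,\gamma(\sigma))$; equivalently, $\gamma'_\sigma$ meets $S^{n-2}_{\can}$ exactly when $\gamma\cap S_\sigma\neq\varnothing$. The hypothesis of the lemma rules this out for every $\sigma\in(0,1)$, hence in particular on the compact subinterval $[t,t']$, so $H$ lands in $X$ and is the desired free homotopy $\gamma'_{t}\simeq\gamma'_{t'}$ in $X$.

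I expect the only delicate step to be the continuity of $\sigma\mapsto I_\sigma$, since the explicit description in \S\ref{I} uses spherical coordinates, which carry coordinate singularities. The cleanest route around this is to invoke the remark at the start of \S\ref{I}: it suffices that \emph{some} continuous family $I_\bullet$ valued in $\SO(n)\times\mathbb{R}_{>0}$ with $I_\sigma(S_\sigma)=S^{n-2}_{\can}$ exists, and the argument above uses only that existence together with the bijectivity and equivariance of each $I_\sigma$. One should also note in passing that $S_\sigma$ is a genuine $(n-2)$-sphere for every $\sigma\in(0,1)$, as explained in the discussion following Theorem~\ref{The1} (here it matters that $\gamma(\sigma)\neq o$ on the open interval), so that the hypothesis $\gamma\cap S_\sigma=\varnothing$ is meaningful; beyond this, the proof is nothing more than the observation that a continuous path of loops avoiding $S^{n-2}_{\can}$ is a homotopy in the complement.
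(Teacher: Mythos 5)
Your proof is correct and takes essentially the same approach as the paper: the paper's homotopy $F_{t,t'}(\cdot,T):=\gamma'_{(1-T)t+Tt'}$ is exactly your $H$, with the same observation that each intermediate loop $\gamma'_{t''}$ misses $S^{n-2}_{\can}$ precisely because $\gamma\cap S_{t''}=\varnothing$. You simply make explicit two points the paper leaves implicit, namely the continuity of $\sigma\mapsto I_{\sigma}$ and the bijectivity argument identifying $\gamma'_{\sigma}\cap S^{n-2}_{\can}\neq\varnothing$ with $\gamma\cap S_{\sigma}\neq\varnothing$.
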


 \begin{proof}
Let $F_{t,t'}(\cdot,T) \colonequals \gamma'_{(1-T)t + (T)t'}$. This is a homotopy taking $\gamma'_{t}$ (at $T = 0$) to $\gamma'_{t'}$ (at $T = 1$). For any $T \in [0,1]$, $F_{t,t'}(\cdot,T)$ is simply $\gamma'_{t''}$ for some $t'' \in (0,1)$. Thus, since no curve $\gamma'_{t''}$ intersects $X$, $F_{t,t'}$ is a homotopy in $X$.
%Because $T_{t''}$, $R_{t''}$, $s_{t''}$ are a continuous family of continuous bijections, $I_{t''}$ is also a continuous bijection. Therefore, if $\gamma'_{t''}$ intersects $S^{n-2}_{\can}$, then $I^{-1}_{t''} \circ \gamma'_{t''} = \gamma$ intersects $I^{-1}_{t''}(S^{n-2}_{\can}) = S_{t''}$. Therefore, $F_{t,t'}$ never intersects $S^{n-2}_{\can}$, and is thus a homotopy in $X$ taking $\gamma'_{t}$ to $\gamma'_{t'}$.
\end{proof}

\section{Finding $t_{1}$} \label{HomIdsection}

\begin{Notation} Throughout the rest of the paper, we let $c$ denote the constant path at $o$. \end{Notation}

Let us choose $t_1$ such that $\|\gamma(t_1)-o\|:=d_{\max}$ is maximal. We shall prove the following lemma regarding $t_1$:

\begin{Lemma}\label{HomId}
Assume there is no $t \in (0,1)$ such that $S_{t}$ intersects $J$. Then $\gamma'_{t_{1}}$ is homotopic in $X$ to $c$.
\end{Lemma}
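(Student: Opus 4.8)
The plan is to prove the statement first in the original copy of $\mathbb{R}^{n}$, before applying $I_{t_{1}}$, and then transport it through $I_{t_{1}}$. Since $I_{t_{1}}$ is a homeomorphism of $\mathbb{R}^{n}$ carrying $S_{t_{1}}$ onto $S^{n-2}_{\can}$, it restricts to a homeomorphism $\mathbb{R}^{n}\setminus S_{t_{1}}\to X$; so it is enough to produce a nullhomotopy of the loop $\gamma$ inside $\mathbb{R}^{n}\setminus S_{t_{1}}$ and push it forward by $I_{t_{1}}$. The conclusion $\gamma'_{t_{1}}\simeq c$ then follows because a loop that is freely nullhomotopic in the path-connected space $X$ is freely homotopic to the constant loop at any of its points, in particular to $c$ (and $o\in X$).

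The only geometric input is the following. Every $q\in S_{t_{1}}=\tri(o,\gamma(t_{1}))$ satisfies $\|q-o\|=r\,\|\gamma(t_{1})-o\|=r\,d_{\max}$, and $r\geq1$, so $\|q-o\|\geq d_{\max}$; that is, $S_{t_{1}}$ lies outside the open ball of radius $d_{\max}$ about $o$. On the other hand $o=\gamma(0)$ and $d_{\max}=\max_{t}\|\gamma(t)-o\|$ by the choice of $t_{1}$, so $\gamma$ lies in the closed ball $\overline{B}$ of radius $d_{\max}$ about $o$. Combining this with the hypothesis that $S_{t}$ meets $J$ for no $t$ --- in particular $S_{t_{1}}\cap J=\varnothing$ --- gives $\gamma\subseteq Y\colonequals\overline{B}\setminus S_{t_{1}}$.

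The main step is to check that $Y$ is star-shaped with respect to $o$. Fix $x\in Y$ and $\lambda\in[0,1]$ and put $y\colonequals o+\lambda(x-o)$. Then $\|y-o\|=\lambda\|x-o\|\leq d_{\max}$, so $y\in\overline{B}$; and if also $y\in S_{t_{1}}$ then $\|y-o\|\geq d_{\max}$ by the previous paragraph, forcing $\lambda\|x-o\|=d_{\max}$ and hence $\lambda=1$, so $y=x\in S_{t_{1}}$, contradicting $x\in Y$. Thus $[o,x]\subseteq Y$. Since $\gamma$ is based at $o=\gamma(0)$, the straight-line homotopy $G(u,s)\colonequals o+(1-s)(\gamma(u)-o)$ takes values in $Y\subseteq\mathbb{R}^{n}\setminus S_{t_{1}}$ and contracts $\gamma$ to the constant loop at $o$; hence $\gamma$ is (freely, indeed based) nullhomotopic in $\mathbb{R}^{n}\setminus S_{t_{1}}$.

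Pushing $G$ forward by $I_{t_{1}}$ yields a nullhomotopy of $\gamma'_{t_{1}}=I_{t_{1}}(\gamma)$ in $X$, and therefore $\gamma'_{t_{1}}\simeq c$. I do not anticipate a real obstacle: the entire argument rests on the single observation that the normalization $r\geq1$ pushes $S_{t_{1}}$ out of the smallest ball about $o$ that contains $\gamma$, after which the star-shapedness check above is routine. The only points that warrant a word of care are that $d_{\max}>0$ (so that $\overline{B}$ is a genuine ball), which holds because a Jordan curve is nonconstant, and the passage from a free nullhomotopy in $X$ to a free homotopy onto the specific loop $c$, which is immediate since $X$ is path-connected.
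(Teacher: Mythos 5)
Your proposal is correct and follows essentially the same route as the paper: both contract $\gamma$ to $o$ by the straight-line homotopy, observe that $r\geq 1$ places $S_{t_{1}}$ at distance $rd_{\max}\geq d_{\max}$ from $o$ so the contraction (which stays in the closed ball of radius $d_{\max}$, and strictly inside except on $\gamma$ itself, where the hypothesis $S_{t_{1}}\cap J=\varnothing$ applies) never meets $S_{t_{1}}$, and then push the homotopy forward through $I_{t_{1}}$ into $X$. Your packaging via the star-shaped region $\overline{B}\setminus S_{t_{1}}$ and the explicit remark about passing to the constant loop $c$ in the path-connected space $X$ are just slightly more careful renderings of the same argument.
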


The idea is as follows. Let $x$ denote an arbitrary point in $S_{t_{1}}$. Then we have $\| x - o \| = rd_{\max}$, since $r \geq 1$. Then $\| \gamma(t) - o \| \leq \| x - o \|$ for any $t \in (0,1)$. Thus, $S_{t_{1}}$ has already slipped out from $\gamma$, being far enough from $\gamma$ that $\gamma$ can be shrunk to $c$ by a linear homotopy $L$ that does not intersect $S_{t}$. See Figure~\ref{fig:image2}.

\begin{figure}
\begin{subfigure}{0.45\textwidth}
\resizebox{0.9\linewidth}{!}{\input{Diagram3prime.tex}} 
\caption{As $r \geq 1$, $rd_{\max} \geq d_{\max}$. Thus, $S_{t_{1}}$ is so far that it cannot intersect $\gamma$ as it is homotoped to $c$ at $o$.}
\label{homtoid}
\end{subfigure}\hfill
\begin{subfigure}{0.45\textwidth}
\resizebox{0.9\linewidth}{!}{\input{Diagram1001.tex}} 
\caption{$\gamma$ being homotoped to $c$ by a straight line homotopy.}
\label{fig:subim1}
\end{subfigure}
\caption{}
\label{fig:image2}
\end{figure}

\begin{proof}[Proof of Lemma~\ref{HomId}]
We first show that the straight-line homotopy $L:[0,1] \times [0,1] \rightarrow \mathbb{R}^{n}$ from $\gamma$ to $c$ never intersects $S_{t_{1}}$. We consider two cases for $L(s, \cdot)$; first, when $t = 0$, and second, when $t \in (0,1]$.
\begin{itemize}
    \item $t = 0$: $L(s,0) = \gamma$ does not intersect $S_{t_{1}}$ for any $s \in [0,1]$ because of our assumption that no such intersection occurs. 
    \item  $t \in (0,1]$: Since $L$ is a straight-line homotopy to $o$, $\| L(s,\cdot) - o \|$ must be monotonically decreasing over $(0,1]$, so in particular
  $$\| L(s,t) - o \| < d_{\max} \leq \| x - o \|.$$
 That is, the distance of $L(s,t)$ from $o$ is always less than that of $x$ from $o$. Thus, $L$ never intersects $S_{t_{1}}$ when $t \in (0,1]$.
 \end{itemize}
 We conclude that $L(s,t)$ does not intersect $S_{t_{1}}$ for any $s,t \in [0,1]$.
 
 Thus $I_{t_{1}}$ induces the desired homotopy $I_{t_{1}} \circ L$ taking $I_{t_{1}} \circ \gamma = \gamma'_{t_{1}}$ to $I_{t_{1}} \circ c \colonequals c'$. Since $L$ never intersects $S_{t_{1}}$, and since $I_{t_{1}}$ is a bijection, $I_{t_{1}} \circ L$ never intersects $I_{t_{1}}(S_{t_{1}}) = S^{n-2}_{\can}$. Therefore, $\gamma'_{t_{1}} \simeq c'$ in $X$ via the homotopy $I_{t_{1}} \circ L$. 
 \end{proof}
 
 \section{Finding $t_{2}$} \label{NotHomIdsection}
 
\begin{Lemma}\label{NotHomId}
 Let $\theta_{v}$ be the angle of the vertex of $v$ the triangle $\triangle$ to be inscribed. Suppose there is no $t \in (0,1)$ such that $\gamma$ intersects $S_{t}$. If $\theta_{v}$ is such that
 $$\limsup_{\delta\rightarrow0^{+}} \Theta_{\delta} < \theta_{v} <\liminf_{\delta\rightarrow0^{+}} \Theta'_{\delta},$$
 then there exists some $t_{2} \in (0,1)$ has the property that $\gamma'_{t_{2}} \not\simeq c'$ in $X$.
 \end{Lemma}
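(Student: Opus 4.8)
The plan is to detect $\gamma'_{t_2}$ by a mod-$2$ linking number. Under the standing hypothesis that $\gamma\cap S_t=\varnothing$ for all $t$, the mod-$2$ linking number $\operatorname{lk}_2(\gamma,S_t)\in\mathbb{Z}/2$ is defined, and since $I_t$ is an orientation-preserving similarity of $\mathbb{R}^n$ carrying $S_t$ onto $S^{n-2}_{\can}$ it preserves this invariant, so $\operatorname{lk}_2(\gamma'_t,S^{n-2}_{\can})=\operatorname{lk}_2(\gamma,S_t)$. If this number is nonzero then the class of $\gamma'_t$ in $H_1(X;\mathbb{Z}/2)$ is nonzero; as the homology class of a loop is a free-homotopy invariant that vanishes on the constant loop $c'$, it follows that $\gamma'_t\not\simeq c'$ in $X$. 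It therefore suffices to produce one $t_2\in(0,1)$ with $\operatorname{lk}_2(\gamma,S_{t_2})\neq0$. (For $n=2$ this is essentially the winding-number obstruction underlying Nielsen's theorem.)

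To choose $t_2$: from $\limsup_{\delta\to0^+}\Theta_{\delta}<\theta_{v}$ fix $\delta_0$ with $\angle(\overrightarrow{o\gamma(s)},\overrightarrow{o\gamma(s')})<\theta_{v}$ for all $s,s'\in(0,\delta_0)$, and from $\liminf_{\delta\to0^+}\Theta'_{\delta}>\theta_{v}$ fix $\delta_1$ with $\angle(\overrightarrow{o\gamma(s)},\overrightarrow{o\gamma(s')})>\theta_{v}$ for all $s\in(1-\delta_1,1)$ and $s'\in(0,\delta_1)$; set $\delta=\min(\delta_0,\delta_1)$. Since $\gamma$ is injective on $(0,1)$, $\mu\colonequals\min_{s\in[\delta,1-\delta]}\|\gamma(s)-o\|>0$, and since $\gamma(t)\to o$ as $t\to0^+$ we may pick $t_2\in(0,\delta)$ with $r\|\gamma(t_2)-o\|<\mu$. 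Write $p=\gamma(t_2)$, $u=\overrightarrow{op}/\|\overrightarrow{op}\|$ and $R=r\|\overrightarrow{op}\|<\mu$, so that $S_{t_2}$ is the round $(n-2)$-sphere cut from the sphere of radius $R$ about $o$ by the cone $\{x:\angle(\overrightarrow{ox},u)=\theta_{v}\}$; in particular $S_{t_2}\subset\overline{B}(o,R)$.

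The crux, and what I expect to be the main obstacle, is a crossing count. Assume first $\theta_{v}<\tfrac{\pi}{2}$, and take the flat $(n-1)$-disk $\Sigma=\{x:\langle\overrightarrow{ox},u\rangle=R\cos\theta_{v}\}\cap\overline{B}(o,R)$ spanning $S_{t_2}$; a direct computation gives $\partial\Sigma=S_{t_2}$, $\Sigma\subset\overline{B}(o,R)$, $o\notin\Sigma$, and $\angle(\overrightarrow{ox},u)\le\theta_{v}$ for all $x\in\Sigma$. Together with the cone frustum $F=\{x:\angle(\overrightarrow{ox},u)=\theta_{v}\}\cap\overline{B}(o,R)$, the disk $\Sigma$ is the flat face of a compact region $W\subset\overline{B}(o,R)$ with $\partial W=\Sigma\cup F$. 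Split $\gamma$ at $\delta$ and $1-\delta$ into $\gamma_{\mathrm{in}}\ast\gamma_{\mathrm{mid}}\ast\gamma_{\mathrm{out}}$. Since $R<\mu$, the arc $\gamma_{\mathrm{mid}}$ stays outside $\overline{B}(o,R)\supset\Sigma$; the first angle bound puts $\gamma_{\mathrm{in}}$ (off $o$) in the open cone $\{\angle(\overrightarrow{ox},u)<\theta_{v}\}$, so it misses $F$; the second puts $\gamma_{\mathrm{out}}$ (off $o$) in $\{\angle(\overrightarrow{ox},u)>\theta_{v}\}$, so it misses $\Sigma$. The surviving arc $\gamma_{\mathrm{in}}$ emerges from $o$ through the interior of $W$ and ends at $\gamma(\delta)\notin\overline{B}(o,R)\supset W$; meeting $\partial W$ only through $\Sigma$, it crosses $\Sigma$ an odd number of times mod $2$ — after, if needed, a transversality perturbation supported near $\gamma_{\mathrm{in}}$ and away from $S_{t_2}$, which does not disturb the strict angle inequality off $o$. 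Hence the mod-$2$ intersection number of $\gamma$ with $\Sigma$ is $1$, i.e.\ $\operatorname{lk}_2(\gamma,S_{t_2})=1\neq0$. When $\theta_{v}\ge\tfrac{\pi}{2}$ one instead regards $S_{t_2}$ as the radius-$R$ sphere on the cone of half-angle $\pi-\theta_{v}$ about $-u$; the angle bounds then place $\gamma_{\mathrm{out}}$ inside and $\gamma_{\mathrm{in}}$ outside this cone, and the same count with the two end-arcs interchanged again gives $\operatorname{lk}_2(\gamma,S_{t_2})=1$, the limiting case $\theta_{v}=\tfrac{\pi}{2}$ being handled by a small perturbation of the spanning disk off $o$.

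By the first paragraph, $\operatorname{lk}_2(\gamma,S_{t_2})\neq0$ forces $\gamma'_{t_2}\not\simeq c'$ in $X$, as claimed. The delicate points to get right are: choosing the spanning chain so that the angle hypotheses annihilate $\gamma_{\mathrm{mid}}$ and exactly one of the two end-arcs, so that the crossing parity of the one surviving arc with $\Sigma$ is pinned down by the topology of the region $W$ it leaves; making the intersection count legitimate for a merely continuous Jordan curve, which is where it matters that $S_{t_2}$ is bounded away both from $o$ and from $\gamma_{\mathrm{mid}}$ so the transversality perturbation can be localised without destroying the strict angle inequalities; and the acute/obtuse dichotomy, resolved by passing to the antipodal cone. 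A reasonable way to package the rigorous argument is to first replace $\gamma$ near $o$, up to homotopy in $\mathbb{R}^n\setminus S_{t_2}$, by a straight-segment model (exploiting convexity of the solid cone when $\theta_v\le\tfrac\pi2$), and then carry out the crossing count for that model, where $\gamma_{\mathrm{in}}$ literally pierces $\Sigma$ once.
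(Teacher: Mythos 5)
Your proposal is correct in substance, but it takes a genuinely different technical route from the paper's. The choice of $t_{2}$ is essentially the same in both arguments: pick $\gamma(t_{2})$ so close to $o$ that $S_{t_{2}}$ sits inside a ball $\overline{B}(o,R)$ which meets $J$ only in the short arc around $o$; and both arguments ultimately show that $\gamma$ links $S_{t_{2}}$ nontrivially. The paper detects this by applying the map $P(x)=\bigl(\sqrt{x_{1}^{2}+\cdots+x_{n-1}^{2}},\,x_{n}\bigr)$, which collapses $S^{n-2}_{\can}$ to the single point $(1,0)$, and then computes an integer winding number of the continuous planar loop $P\circ\gamma'_{t_{2}}$ about $(1,0)$ by splitting the loop at $\epsilon$ and $1-\epsilon$ (the chain of lemmata from Lemma~\ref{first} to Lemma~\ref{Flatnothom}); this needs no smoothing or transversality, since winding numbers of continuous planar loops are elementary. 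You instead stay in $\mathbb{R}^{n}$ and read off the mod-$2$ linking number as the intersection parity of $\gamma$ with a spanning $(n-1)$-disk $\Sigma$ of $S_{t_{2}}$, using the angle hypotheses to put the incoming arc strictly inside the solid cone (so it misses the lateral surface $F$), the outgoing arc strictly outside it (so it misses $\Sigma$), and the middle of the curve outside $\overline{B}(o,R)$ altogether; the inside-to-outside crossing argument then pins the parity at $1$, and a nonzero class in $H_{1}(X;\mathbb{Z}/2)$ rules out free nullhomotopy. Your version is more geometric and makes the role of $\theta_{v}$ very transparent, at the cost of invoking mod-$2$ intersection/linking theory for merely continuous loops (your straight-segment replacement of $\gamma\mid_{[0,\delta]}$, justified by convexity of the cone when $\theta_{v}\le\frac{\pi}{2}$, is a legitimate way to make this rigorous) and of the acute/obtuse case split; it also yields only a mod-$2$ invariant where the paper gets an integer winding number, though that is enough for the lemma. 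The boundary issues you flag are real but harmless: split at $\delta/2$ rather than $\delta$ so the strict angle inequality holds at the split point, start the crossing count at a point $\gamma(s_{0})$ in the interior of $W$ rather than at the apex $o\in\partial W$, and for the case $\theta_{v}=\frac{\pi}{2}$ (indeed for all $\theta_{v}$) it is cleanest to take the spherical cap $\{x:\|x-o\|=R,\ \angle(\overrightarrow{ox},u)\le\theta_{v}\}$ as the spanning surface, which has boundary $S_{t_{2}}$, avoids $o$, and removes the need for any perturbation of the disk.
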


The proof involves proving a series of lemmata (namely, Lemmata~\ref{Comp} to ~\ref{Flatnothom}) leading to a proof of Lemma~\ref{NotHomId}. We begin by recalling a well-known and simple preliminary result.

\begin{Lemma}\label{Comp}
     Let $A,B \subset \mathbb{R}^{n}$ be two disjoint compact sets. There is some $\inf(A,B)>0$ such that $\| a - b \| \geq \inf(A,B)$ for any $a \in A$ and any $b \in B$.
 \end{Lemma}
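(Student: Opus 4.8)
The plan is to exploit the compactness of the product $A \times B$ together with the continuity of the Euclidean distance. First I would introduce the function $f \colon A \times B \to \RR_{\geq 0}$ given by $f(a,b) = \| a - b \|$. This map is continuous, being the composition of the continuous subtraction map $\RR^{n} \times \RR^{n} \to \RR^{n}$ with the norm. Since $A$ and $B$ are each compact, $A \times B$ is compact, so by the extreme value theorem $f$ attains its infimum at some point $(a_{0}, b_{0}) \in A \times B$. I then set $\inf(A,B) \colonequals f(a_{0},b_{0}) = \| a_{0} - b_{0} \|$.

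It then remains only to verify that this value is strictly positive. Since $A \cap B = \varnothing$ while $a_{0} \in A$ and $b_{0} \in B$, we must have $a_{0} \neq b_{0}$, and hence $\| a_{0} - b_{0} \| > 0$. By the defining property of the infimum, $\| a - b \| \geq \| a_{0} - b_{0} \| = \inf(A,B) > 0$ for every $a \in A$ and every $b \in B$, which is exactly the claim.

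As an alternative (should one prefer to avoid explicitly invoking the extreme value theorem), I could argue by contradiction: if $\inf_{(a,b) \in A \times B} \| a - b \| = 0$, choose sequences $(a_{k}) \subset A$ and $(b_{k}) \subset B$ with $\| a_{k} - b_{k} \| \to 0$; by sequential compactness of $A$ pass to a subsequence along which $a_{k} \to a$ for some $a \in A$, and then $b_{k} \to a$ as well, forcing $a \in B$ since $B$ is closed, contradicting $A \cap B = \varnothing$.

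There is no genuine obstacle in this lemma; the only point requiring care is that both hypotheses are genuinely used — compactness guarantees the infimum is attained (an infimum over merely closed disjoint sets can be $0$, e.g.\ the graphs of $y = 0$ and $y = 1/x$ in $\RR^{2}$), while disjointness guarantees the attained value does not vanish.
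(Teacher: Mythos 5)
Your proof is correct, and it is the standard argument: the distance function is continuous on the compact product $A \times B$, so it attains its infimum, which is nonzero precisely because the sets are disjoint (your sequential-compactness variant is equally fine). The paper itself records this lemma as a well-known preliminary fact and gives no proof, so there is nothing to compare against; your write-up supplies exactly the argument the authors are implicitly invoking.
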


For each of the remaining lemmata (from Lemma~\ref{first} to Lemma~\ref{Flatnothom}), we assume the same hypotheses as are assumed for Lemma~\ref{NotHomId}, namely that there is no $t \in (0,1)$ such that $\gamma$ intersects $S_{t}$, and that the angle $\theta_{v}$ of some vertex $v$ of $\triangle$ is such that $\limsup_{\delta\rightarrow0^{+}} \Theta_{\delta} < \theta_{v} <\liminf_{\delta\rightarrow0^{+}} \Theta'_{\delta}$.

\begin{Lemma}\label{first}
There exists some $\epsilon>0$ such that $\sup\Theta_{\varepsilon} < \theta_{v} < \inf\Theta'_{\varepsilon}$.
\end{Lemma}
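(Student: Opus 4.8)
The plan is to read off the conclusion directly from the definitions of $\limsup_{\delta\to0^{+}}\Theta_{\delta}$ and $\liminf_{\delta\to0^{+}}\Theta'_{\delta}$, using only that the relevant suprema and infima are monotone functions of $\delta$. For $\delta\in(0,\tfrac12)$ set $g(\delta):=\sup\Theta_{\delta}$ and $h(\delta):=\inf\Theta'_{\delta}$; these are finite, as every angle lies in $[0,\pi]$. Since $\Theta_{\delta}$ is defined on the shrinking domain $(0,\delta)^{2}$, the symbol $\limsup_{\delta\to0^{+}}\Theta_{\delta}$ must be understood as $\lim_{\delta\to0^{+}}g(\delta)$, and likewise $\liminf_{\delta\to0^{+}}\Theta'_{\delta}=\lim_{\delta\to0^{+}}h(\delta)$. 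So the hypothesis of the lemma reads $\lim_{\delta\to0^{+}}g(\delta)<\theta_{v}<\lim_{\delta\to0^{+}}h(\delta)$.

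Next I would record the monotonicity. If $\delta_{1}<\delta_{2}$, then $(0,\delta_{1})^{2}\subset(0,\delta_{2})^{2}$ and $\Theta_{\delta_{1}}$, $\Theta_{\delta_{2}}$ agree where both are defined (the value at $(s,s')$ is the angle between $\vv{o\gamma(s)}$ and $\overrightarrow{o\gamma(s')}$, independent of $\delta$), so $g(\delta_{1})\le g(\delta_{2})$; similarly $(1-\delta_{1},1)\times(0,\delta_{1})\subset(1-\delta_{2},1)\times(0,\delta_{2})$ forces $h(\delta_{1})\ge h(\delta_{2})$. Thus $g$ is non-decreasing and $h$ is non-increasing in $\delta$, whence $\lim_{\delta\to0^{+}}g(\delta)=\inf_{\delta}g(\delta)$ and $\lim_{\delta\to0^{+}}h(\delta)=\sup_{\delta}h(\delta)$. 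The hypothesis $\inf_{\delta}g(\delta)<\theta_{v}$ then yields some $\delta_{1}\in(0,\tfrac12)$ with $g(\delta_{1})<\theta_{v}$, and $\theta_{v}<\sup_{\delta}h(\delta)$ yields some $\delta_{2}\in(0,\tfrac12)$ with $\theta_{v}<h(\delta_{2})$.

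Finally I would set $\epsilon:=\min(\delta_{1},\delta_{2})\in(0,\tfrac12)$ and invoke monotonicity once more: $\sup\Theta_{\epsilon}=g(\epsilon)\le g(\delta_{1})<\theta_{v}$ and $\inf\Theta'_{\epsilon}=h(\epsilon)\ge h(\delta_{2})>\theta_{v}$, which is exactly the assertion $\sup\Theta_{\varepsilon}<\theta_{v}<\inf\Theta'_{\varepsilon}$. There is no serious obstacle here; the only point requiring care is the correct interpretation of $\limsup_{\delta\to0^{+}}\Theta_{\delta}$ and $\liminf_{\delta\to0^{+}}\Theta'_{\delta}$, namely that because the domains shrink with $\delta$ these are limits of suprema and infima, and the monotonicity of $\delta\mapsto\sup\Theta_{\delta}$ and $\delta\mapsto\inf\Theta'_{\delta}$ is what makes those limits exist and coincide with $\inf_{\delta}$ and $\sup_{\delta}$ respectively. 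Once that is pinned down, the argument is the two-line extraction of witnesses above.
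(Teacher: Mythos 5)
Your proof is correct and follows essentially the same route as the paper: extract witnesses $\delta_{1},\delta_{2}$ from the hypothesis, take $\epsilon=\min(\delta_{1},\delta_{2})$, and conclude by monotonicity of $\delta\mapsto\sup\Theta_{\delta}$ and $\delta\mapsto\inf\Theta'_{\delta}$. You merely make explicit the interpretation of $\limsup_{\delta\to0^{+}}\Theta_{\delta}$ and the monotonicity step that the paper leaves implicit, which is a welcome clarification but not a different argument.
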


\begin{proof} 
Since $\limsup_{\delta\rightarrow0^{+}}\Theta_{\delta} < \theta_{v} < \liminf_{\delta\rightarrow0^{+}}\Theta'_{\delta}$, there exist arbitrarily small values of $\varepsilon_{1},\varepsilon_{2} > 0$ such that $\sup\Theta_{\varepsilon_{1}} < \theta_{v} < \inf\Theta'_{\varepsilon_{2}}$. Let $\varepsilon \colonequals \min(\varepsilon_{1},\varepsilon_{2})$. Then, $\sup\Theta_{\varepsilon} < \theta_{v} < \inf\Theta'_{\varepsilon}$. 
\end{proof}

\begin{Lemma}\label{nhoodinsphere}
 For a positive real number $r$, let $B(o,r)$ be the closed $n$-ball of radius $r$, centred at $o$. Additionally, let the neighbourhood $N \colonequals \Ima \gamma\mid_{(1 - \epsilon, \epsilon)}$. Then there exists $d_{\min}>0$ such that $J \cap B(o,d_{\min}) \subset N$.
\end{Lemma}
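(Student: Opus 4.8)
The plan is to obtain this as an essentially immediate consequence of Lemma~\ref{Comp}. First I would fix the reading of the domain of $\gamma\mid_{(1-\epsilon,\epsilon)}$: because $\gamma$ identifies the two endpoints of $[0,1]$, the symbol $(1-\epsilon,\epsilon)$ is understood cyclically as $[0,\epsilon)\cup(1-\epsilon,1]$, so that $N$ is a genuine neighbourhood of $o=\gamma(0)$ in $J$. With this reading, every point of $J$ that is not in $N$ has the form $\gamma(t)$ with $t\in[\epsilon,1-\epsilon]$; hence $J\setminus N\subseteq K$, where $K\colonequals\gamma\bigl([\epsilon,1-\epsilon]\bigr)$ is compact, being the continuous image of a compact interval.

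Next I would note that $o\notin K$. By hypothesis $\gamma$ is injective on $[0,1]$ except that $\gamma(0)=\gamma(1)$, so $\gamma(t)=o$ only when $t\in\{0,1\}$; since $\epsilon\in(0,\tfrac12)$, the interval $[\epsilon,1-\epsilon]$ contains neither endpoint, and therefore $o\notin K$.

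Now I would apply Lemma~\ref{Comp} to the disjoint compact sets $\{o\}$ and $K$, obtaining a constant $c\colonequals\inf(\{o\},K)>0$ with $\|o-b\|\ge c$ for all $b\in K$. Set $d_{\min}\colonequals c/2$. If $q\in J\cap B(o,d_{\min})$, then $\|q-o\|\le c/2<c$, so $q\notin K$; since $J\setminus N\subseteq K$, this forces $q\in N$. Thus $J\cap B(o,d_{\min})\subset N$, as required.

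This argument has no real obstacle; the only subtleties are (i) interpreting $(1-\epsilon,\epsilon)$ cyclically, so that $N$ is genuinely a neighbourhood of $o$ whose complement is contained in the compact set $K$, and (ii) shrinking the radius from $c$ to $c/2$, which is needed because $K$ is compact and the distance bound $c$ may be attained, so that the closed ball of radius exactly $c$ could still meet $K$.
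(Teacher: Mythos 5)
Your proof is correct and follows essentially the same route as the paper: apply Lemma~\ref{Comp} to $\{o\}$ and the compact set $J\setminus N=\gamma([\epsilon,1-\epsilon])$ and take $d_{\min}$ to be half the resulting positive distance. The only difference is that you spell out the compactness of $J\setminus N$ and the disjointness $o\notin\gamma([\epsilon,1-\epsilon])$ (via injectivity of $\gamma$ away from the endpoints), which the paper simply asserts.
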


Note that $(1 - \epsilon, \epsilon)$ is taken modulo 1, i.e.\ $(1-\epsilon,\epsilon)=[0,\epsilon)\cup(1-\epsilon,1]$.

\begin{proof}
Let $d_{\min} = \frac{\inf(\{o\},J - N)}{2}$. Since $\{o\}$ and $J - N$ are disjoint and compact, Lemma~\ref{Comp} tells us that $d_{\min} > 0$. Since $d_{\min} < \inf(\{o\},J - N)$, it thus follows that $(J - N) \cap B(o,d_{\min})=\varnothing$, and thus that $J \cap B(o,d_{\min}) \subset N$.
\end{proof}

\begin{Lemma}\label{sphereboundary}
 There exists some $t_{2} \in [0,1]$ such that $S_{t_{2}} \subset \partial B(o,d_{\min})$.
\end{Lemma}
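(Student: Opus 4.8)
The plan is to observe that the whole sphere $S_t=\tri(o,\gamma(t))$ sits at one easily computed distance from $o$, and then to apply the intermediate value theorem to that distance as a function of $t$.

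First I would record the key geometric fact: by the very definition of $\tri(o,p)$, every point $q\in\tri(o,\gamma(t))$ satisfies $\|q-o\|=r\,\|\gamma(t)-o\|$, where $r\ge 1$ is the fixed ratio attached to $\triangle$. Hence $S_t\subseteq\partial B\!\left(o,\,r\,\|\gamma(t)-o\|\right)$ for every $t$ (indeed $S_t$ is precisely the intersection of this sphere with a second sphere centred at $\gamma(t)$). Consequently, to prove the lemma it is enough to produce a $t_2\in[0,1]$ with $r\,\|\gamma(t_2)-o\|=d_{\min}$.

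Next I would set $f\colon[0,1]\to\mathbb{R}_{\ge 0}$, $f(t)\colonequals r\,\|\gamma(t)-o\|$, which is continuous since $\gamma$ is. We have $f(0)=0$ because $\gamma(0)=o$, and $f(t_1)=r\,d_{\max}\ge d_{\max}$ because $r\ge 1$. It remains only to check the strict inequality $d_{\min}<d_{\max}$, so that $d_{\min}$ lies strictly between the values $f(0)$ and $f(t_1)$. Recall from Lemma~\ref{nhoodinsphere} that $d_{\min}=\tfrac12\inf(\{o\},J-N)$ with $N=\Ima\gamma|_{(1-\epsilon,\epsilon)}$; since $\epsilon<\tfrac12$, the arc $(1-\epsilon,\epsilon)$ is a proper subinterval of $[0,1]$, so $J-N\neq\varnothing$, and choosing any $q_0\in J-N$ gives $d_{\min}\le\tfrac12\|q_0-o\|<\|q_0-o\|\le d_{\max}$. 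Therefore $f(0)=0<d_{\min}<f(t_1)$, and the intermediate value theorem applied to $f$ on $[0,t_1]$ yields some $t_2\in(0,t_1)$ with $f(t_2)=d_{\min}$. For this $t_2$ every point of $S_{t_2}$ is at distance exactly $d_{\min}$ from $o$, i.e.\ $S_{t_2}\subseteq\partial B(o,d_{\min})$, as required.

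I do not expect a serious obstacle here: the only step needing a little care is the strict inequality $d_{\min}<d_{\max}$, which is exactly what licenses the intermediate value theorem, and it rests on $J-N$ being nonempty together with the factor $\tfrac12$ in the definition of $d_{\min}$. The hypothesis $r\ge 1$ is used only to guarantee $f(t_1)\ge d_{\max}>d_{\min}$; any bound making $\sup_t f(t)>d_{\min}$ would serve equally well.
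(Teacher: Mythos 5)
Your proof is correct and follows essentially the same route as the paper: both arguments choose $t_{2}$ so that $\|\gamma(t_{2})-o\| = d_{\min}/r$, whence every point of $S_{t_{2}}=\tri(o,\gamma(t_{2}))$ lies at distance $r\cdot (d_{\min}/r)=d_{\min}$ from $o$, i.e.\ $S_{t_{2}}\subset\partial B(o,d_{\min})$. The only difference is that the paper simply asserts the existence of a point $p\in N$ with $\|p-o\|=d_{\min}/r$, whereas you supply the intermediate value theorem argument (and your $t_{2}$ automatically has $\gamma(t_{2})\in N$ by Lemma~\ref{nhoodinsphere}, so it matches the paper's choice).
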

\begin{proof}
Let $p \in N$ be a point such that $\| p - o \| = d \colonequals \frac{d_{\min}}{r}$. We let $t_{2}\in(0,1)$ be such that $p = \gamma(t_{2})$. Then all the points of $S_{t_{2}}$ lie at a distance $rd = d_{\min}$ from $o$. Therefore $S_{t_{2}} \subset \partial B(o,d_{\min})$.
\end{proof}

\begin{Definition}
We say a point lies in \emph{the interior (or exterior) of $S_{t_{2}}$} when it lies within \emph{the interior (or exterior) of $S_{t_{2}}$ when $\Pi_{t_{2}}$ is taken as its ambient space}. 
\end{Definition}

\begin{Lemma}\label{interiorinsphere}
 The interior of $S_{t_{2}}$ lies in $B(o,d_{\min})$.
\end{Lemma}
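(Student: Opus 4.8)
The plan is to use three elementary features of the configuration: $S_{t_2}$ is a round $(n-2)$-sphere of some radius $\rho$, its centre $o'$ lies on the line $op$ where $p:=\gamma(t_2)$, and the hyperplane $\Pi_{t_2}$ containing $S_{t_2}$ is orthogonal to $op$. Granting these, the containment is a single application of the Pythagorean theorem.

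First I would pin down the geometry. Placing $o$ at the origin, subtract the two defining equations $\|q-o\|=r\|p-o\|$ and $\|q-p\|=r'\|p-o\|$ of $S_{t_2}=\tri(o,p)$; the result is an affine equation $q\cdot(p-o)=c$ for a constant $c$, so $\Pi_{t_2}$ is the hyperplane with normal direction $op$, and its centre $o'$ --- the orthogonal projection of $o$ onto $\Pi_{t_2}$ --- is the point where $op$ meets $\Pi_{t_2}$. If $\rho=0$ the interior of $S_{t_2}$ is empty and there is nothing to prove, so assume $\rho>0$ (which holds in any case because $\triangle$ is a non-degenerate triangle). Now for any $x\in\Pi_{t_2}$ write $x-o=(x-o')+(o'-o)$; the vector $x-o'$ is parallel to $\Pi_{t_2}$ and $o'-o$ is parallel to $op$, so they are orthogonal and $\|x-o\|^2=\|x-o'\|^2+\|o'-o\|^2$. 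Taking $x\in S_{t_2}$, where $\|x-o'\|=\rho$ and, by Lemma~\ref{sphereboundary}, $\|x-o\|=d_{\min}$, this gives $\|o'-o\|^2=d_{\min}^2-\rho^2$.

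To finish, let $y$ be an interior point of $S_{t_2}$, i.e.\ $y\in\Pi_{t_2}$ with $\|y-o'\|<\rho$. Then
\[\|y-o\|^2=\|y-o'\|^2+\|o'-o\|^2<\rho^2+(d_{\min}^2-\rho^2)=d_{\min}^2,\]
so $\|y-o\|<d_{\min}$ and hence $y\in B(o,d_{\min})$. The only step needing care is the first one --- verifying that $\Pi_{t_2}\perp op$ and that $o'$ lies on $op$ --- but this is immediate from the presentation of $S_{t_2}$ as an intersection of two spheres centred at $o$ and $p$; the rest is routine.
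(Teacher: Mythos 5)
Your proof is correct, and it follows the same route as the paper: both arguments rest on Lemma~\ref{sphereboundary}, which places $S_{t_{2}}$ on $\partial B(o,d_{\min})$, and then conclude that the disc it bounds in $\Pi_{t_{2}}$ stays inside the ball. The only difference is that you justify this last step explicitly (identifying $\Pi_{t_{2}}$ as orthogonal to $op$ with centre $o'$ on that line and applying the Pythagorean theorem), whereas the paper asserts it in one sentence, implicitly using convexity of $B(o,d_{\min})$; your version simply supplies the detail the paper omits.
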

\begin{proof}
 Since (from Lemma~\ref{sphereboundary}) $S_{t_{2}} \subset \partial B(o,d_{\min})$, it follows that its interior is a subset of $B(o,d_{\min})$ too.
\end{proof}

\begin{Lemma}\label{out}
All intersections of $J - N$ with $\Pi_{t_{2}}$ lie in the exterior of $S_{t_{2}}$.
\end{Lemma}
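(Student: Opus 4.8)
The plan is to prove this by exclusion, reducing it to set inclusions among the balls and spheres already introduced. The guiding observation is that the \emph{closed} solid region cut out by $S_{t_2}$ inside the hyperplane $\Pi_{t_2}$ sits entirely inside the closed ball $B(o,d_{\min})$, whereas no point of $J-N$ can lie in $B(o,d_{\min})$; so the two are disjoint, and any point of $(J-N)\cap\Pi_{t_2}$ is forced into the exterior component.

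Concretely, I would fix an arbitrary $x \in (J-N)\cap\Pi_{t_2}$ and first show $x \notin B(o,d_{\min})$: if $x$ lay in this closed ball, then $x \in J \cap B(o,d_{\min})$, which by Lemma~\ref{nhoodinsphere} is contained in $N$, contradicting $x \in J-N$. Hence $\|x-o\| > d_{\min}$. On the other side, I would note that the interior of $S_{t_2}$ lies in $B(o,d_{\min})$ by Lemma~\ref{interiorinsphere}, while $S_{t_2}$ itself lies in $\partial B(o,d_{\min}) \subseteq B(o,d_{\min})$ by Lemma~\ref{sphereboundary}; so the interior of $S_{t_2}$ together with $S_{t_2}$ is a subset of $B(o,d_{\min})$. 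Since $x \notin B(o,d_{\min})$, the point $x$ is neither in the interior of $S_{t_2}$ nor on $S_{t_2}$. Because $x \in \Pi_{t_2}$ and $S_{t_2}$ partitions $\Pi_{t_2}$ into exactly these three pieces (interior, the separating sphere, exterior), $x$ must lie in the exterior of $S_{t_2}$, which is what we want.

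I do not expect any genuine obstacle: the lemma is essentially a consequence of the three preceding lemmata together with the fact that $B(o,d_{\min})$ is taken to be \emph{closed}. The one point requiring a moment's care is that "interior" and "exterior" are, by the preceding definition, taken relative to the ambient space $\Pi_{t_2}$ rather than $\mathbb{R}^n$; but this is precisely what Lemma~\ref{interiorinsphere} was phrased to handle, so no extra argument is needed. (If one prefers, $x \notin S_{t_2}$ also follows directly from the standing hypothesis that $\gamma$ never meets $S_t$, but the ball inclusion already subsumes this.)
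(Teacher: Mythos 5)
Your proposal is correct and follows essentially the same route as the paper: both exclude the interior of $S_{t_{2}}$ by combining Lemma~\ref{interiorinsphere} with Lemma~\ref{nhoodinsphere} to force a contradiction with $x \in J - N$. The only difference is that you also explicitly rule out $x \in S_{t_{2}}$ itself (via the ball inclusion or the standing non-intersection hypothesis), a case the paper's proof leaves implicit, so your write-up is if anything slightly more complete.
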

\begin{proof}
 Assume that some $p \in (J - N) \cap \Pi_{t_{2}}$ lies in the interior of $S_{t_{2}}$. Since (from Lemma~\ref{interiorinsphere}) the interior of $S_{t_{2}}$ is a subset of $B(o,d_{\min})$, it follows that $p \in (J - N) \cap B(o,d_{\min})$. However, from Lemma~\ref{nhoodinsphere}, $J \cap B(o, d_{\min}) \subset N$, contradicting the assumption that $p$ lies in the interior of $S_{t_2}$.
\end{proof}

See Figure~\ref{ring} for an illustration of the preceding Lemmata.

\begin{figure}
    \centering
    \resizebox{0.65\linewidth}{!}{\input{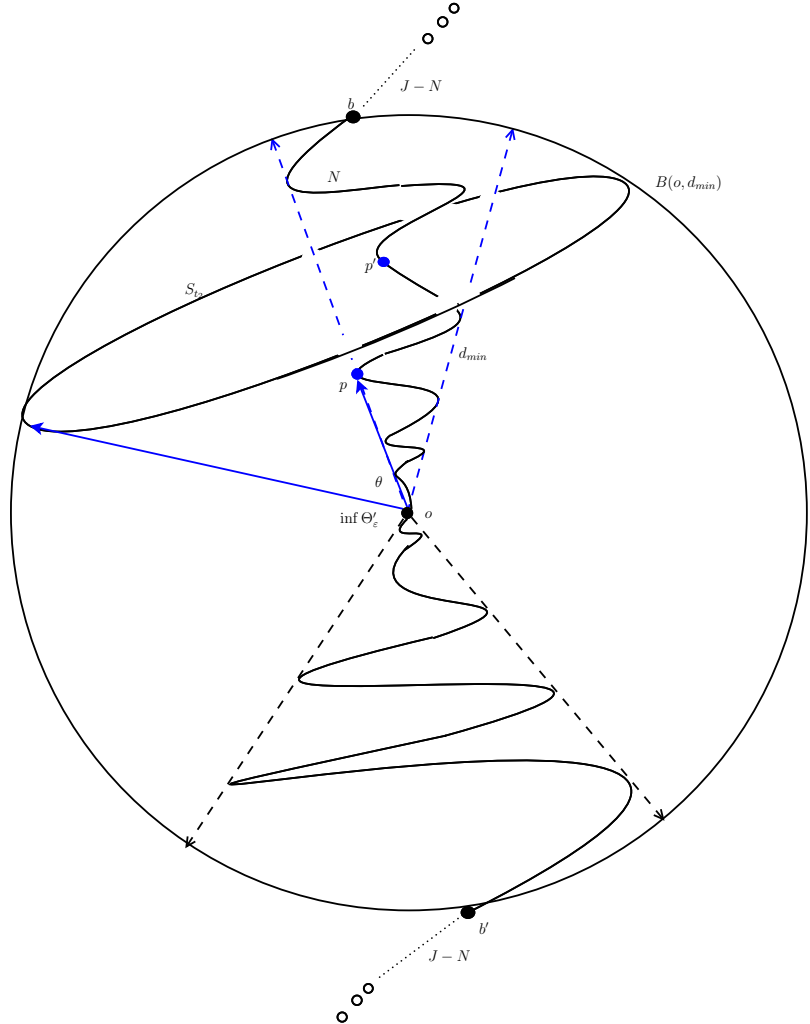}}
    \caption{Here, the angle between the blue lines is $\sup \Theta_{\varepsilon}$. The angle between the left-facing blue line and the left-facing black line is $\inf \Theta'_{\varepsilon}$. Since $\sup\Theta_{\varepsilon} < \theta_{v} < \inf\Theta'_{\varepsilon}$, it follows $S_{t_{2}}$ is wide enough such that $N$ always intersects $\Pi_{t_{2}}$ at the interior of $S_{t_{2}}$. Here, $p'$ denotes one such point of intersection.}
    \label{ring}
\end{figure}

 Since $I_{t_{2}}$ is an isometry, all of the Lemmata above (which apply for $\gamma$) also apply to the curve $\gamma'_{t_{2}} = I_{t_{2}} \circ \gamma$---for example, $\gamma'_{t_{2}}\mid_{[1 - \epsilon, \epsilon]}$ intersects $I_{t_{2}}(\Pi_{t_{2}}) = \mathbb{R}^{n}_{0}$ only at the exterior of $I_{t_{2}}(S_{t_{2}}) = S^{n-2}_{\can}$.

\begin{Definition}
     Let $P: \mathbb{R}^n \rightarrow \mathbb{R}_{\geq 0} \times \mathbb{R}$ be defined by
    $$P((r_{1}, r_{2}, \dots, r_{n})) = (d(r), r_{n}),$$
    where $d: \mathbb{R}^{n-1} \rightarrow \mathbb{R}_{\geq 0}$ is defined to be
    $$d(r)=\sqrt{r_1^2+r_2^2+\cdots+r_{n-1}^2}.$$
\end{Definition}

\begin{Definition}
      For any given $p \in \mathbb{R}^{2}\setminus\Ima f$, let a path $f: [0,1]$ be parametrized to polar form $(r(t), \theta(t))$, where $r(t) = \|f(t) - p\|$ and $\theta(t)$ continuously maps $t$ to the angle of the segment $pf(t)$ relative to the positive vertical axis from $p$. Then, we define the winding number $\eta_{p}(f)$ of $f$ relative to $p$ by the equation
      $$\frac{\theta(1) - \theta(0)}{2\pi}.$$
\end{Definition}

\begin{Lemma}\label{antic1}
 $\eta_{(1,0)}(P' \circ \gamma'_{t_{2}}\mid_{[1 - \epsilon, \epsilon]}) < 0$.
 \end{Lemma}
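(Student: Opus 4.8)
The plan is to push the problem down to the half-plane $\mathbb{R}_{\ge 0}\times\mathbb{R}$ via $P$ and to read the sign of the winding number directly off the angular geometry of $\gamma$ near $o$. Write $(u,v)$ for the coordinates of $\mathbb{R}_{\ge 0}\times\mathbb{R}$, so $P(x_1,\dots,x_n)=(d(x_1,\dots,x_{n-1}),x_n)$. Because the two spheres whose intersection is $S_{t_2}=\tri(o,p)$ are centred at $o$ and at $p$, the line $\overrightarrow{op}$ is orthogonal to $\Pi_{t_2}$; hence $I_{t_2}$ carries $\overrightarrow{op}$ to $\pm\hat e_n$ and carries $o$ and $p$ into the $x_n$-axis, so $I_{t_2}(o)$ has the form $(0,\dots,0,h)$ and $P(I_{t_2}(o))=(0,h)$, a point of the boundary line $\{u=0\}$. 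A short computation from $\angle(\overrightarrow{oq},\overrightarrow{op})=\theta_v$ for every $q\in S_{t_2}$ gives $h=-\cot\theta_v$ (taking the orientation of $I_{t_2}$ in which $\overrightarrow{op}\mapsto+\hat e_n$) and shows that $(1,0)=P(S^{n-2}_{\can})$ lies on the ray from $(0,h)$ that makes angle \emph{exactly} $\theta_v$ with the $+v$ direction. Finally $P(x)=(1,0)$ iff $x\in S^{n-2}_{\can}$ and $P(x)=(0,h)$ iff $x=I_{t_2}(o)$; since $\gamma$ never meets $S_{t_2}$ and is injective on $(0,1)$, the planar path $f\colonequals P'\circ\gamma'_{t_2}\mid_{[1-\epsilon,\epsilon]}$ misses $(1,0)$ altogether and meets $(0,h)$ only at the parameter value $0\equiv 1$. (Here we take the point $p$ of the proof of Lemma~\ref{sphereboundary} to be a forward point $\gamma(t_2)$, $t_2\in(0,\epsilon)$; this is possible after shrinking $d_{\min}$, which harms nothing in Lemmata~\ref{nhoodinsphere}--\ref{out}.)

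The key geometric input is Lemma~\ref{first}. Because $t_2\in(0,\epsilon)$, the direction $\overrightarrow{op}$ is one of the forward rays $\overrightarrow{o\gamma(s)}$, $s\in(0,\epsilon)$, so $\angle(\overrightarrow{o\gamma(s)},\overrightarrow{op})\le\sup\Theta_\epsilon<\theta_v$ for all such $s$, while $\angle(\overrightarrow{o\gamma(s)},\overrightarrow{op})\ge\inf\Theta'_\epsilon>\theta_v$ for all $s\in(1-\epsilon,1)$. Now $I_{t_2}$ preserves angles, and $P$ preserves angles measured from the $x_n$-axis at a point of that axis (if $a$ lies on the axis then $\angle(\overrightarrow{ax},\hat e_n)=\angle(\overrightarrow{P(a)\,P(x)},+v)$ for all $x$). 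Applying both maps to the two inequalities: $f\mid_{(0,\epsilon]}$ lies in the closed convex cone $C_{\mathrm{fwd}}$ with apex $(0,h)$ spanned by the directions in $\{u\ge 0\}$ making angle $\le\sup\Theta_\epsilon$ with $+v$, and $f\mid_{[1-\epsilon,1)}$ lies in the closed convex cone $C_{\mathrm{bwd}}$ with apex $(0,h)$ spanned by the directions in $\{u\ge 0\}$ making angle $\ge\inf\Theta'_\epsilon$ with $+v$. By the first paragraph $(1,0)$ lies on the ray from $(0,h)$ at angle \emph{exactly} $\theta_v$; since $\sup\Theta_\epsilon<\theta_v<\inf\Theta'_\epsilon$, that ray meets $C_{\mathrm{fwd}}\cup C_{\mathrm{bwd}}$ only in the apex, both cones stay at positive distance from $(1,0)$, and---this is the point---$C_{\mathrm{fwd}}$ and $C_{\mathrm{bwd}}$ lie in the two \emph{opposite} open half-planes bounded by the line $\ell$ through $(0,h)$ and $(1,0)$.

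With this the winding count is short. Let $\Phi$ be a continuous choice of the angle of $f(t)-(1,0)$. Inside $\{u\ge 0\}$ the line $\ell$ is just the ray above, so $f$ meets $\ell$ only at $(0,h)$, i.e.\ only at $t=0\equiv 1$, where $\Phi$ equals $\Phi_0\colonequals\arg((0,h)-(1,0))$. Since $f\mid_{(0,\epsilon]}$ lies strictly in one of the two open half-planes bounded by $\ell$ and misses both rays issuing from $(1,0)$ along $\ell$, a short connectedness argument keeps the lift $\Phi$ on $(0,\epsilon]$ inside the open interval of length $\pi$ that this half-plane subtends at $(1,0)$---an interval with $\Phi_0$ as one endpoint; likewise $\Phi$ on $[1-\epsilon,1)$ stays inside the opposite such interval, again with endpoint $\Phi_0$. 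A one-line sign check---the angular velocity $u\dot v-v\dot u$ of a point leaving $(0,h)$ into $C_{\mathrm{fwd}}$ is negative---shows that $C_{\mathrm{fwd}}$ sits on the side where $\Phi<\Phi_0$ and (symmetrically) $C_{\mathrm{bwd}}$ on the side where $\Phi>\Phi_0$. Hence $\Phi(\epsilon)<\Phi_0<\Phi(1-\epsilon)$, and therefore
\[
\eta_{(1,0)}\bigl(P'\circ\gamma'_{t_2}\mid_{[1-\epsilon,\epsilon]}\bigr)=\frac{\Phi(\epsilon)-\Phi(1-\epsilon)}{2\pi}<0 .
\]

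Beyond these routine planar estimates, the one thing that genuinely needs care is the bookkeeping of orientation: the sign of $\eta_{(1,0)}$ depends on whether $I_{t_2}$ sends $\overrightarrow{op}$ to $+\hat e_n$ or to $-\hat e_n$, and the opposite choice reverses it (composing $I_{t_2}$ with the reflection $x_n\mapsto -x_n$, which again carries $S_{t_2}$ onto $S^{n-2}_{\can}$ and is a homeomorphism of $X$). One must therefore check that the convention fixed for $R_t$ in \S\ref{I}, together with the choice of a forward $p$, is the one giving the negative sign---equivalently, one fixes that convention once and for all and uses it consistently in Lemmata~\ref{first}--\ref{Flatnothom}. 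The only other subtlety is the shrinking of $d_{\min}$ flagged above, needed solely to ensure that $p$ may be taken to be a forward point.
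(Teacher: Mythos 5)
Your proposal is correct and takes essentially the same route as the paper: both arguments confine the forward and backward halves of the arc near $o$ to angular sectors, seen from the image of $o$ on the $x_n$-axis, lying on opposite sides of the ray toward $(1,0)$, which sits at angle exactly $\theta_v$ from $+\hat e_n$, and then read off the sign of the winding about $(1,0)$. Your write-up is in fact tighter than the paper's own (which places the image of $o$ at $(0,1)$ without justification, does not verify that $t_2$ may be taken in $(0,\epsilon)$, and only derives a contradiction from positive winding); the one slip is that your ``one-line sign check'' should use the angular velocity about $(1,0)$, namely $(u-1)\dot v - v\dot u$, rather than $u\dot v - v\dot u$, although the side you assign to $C_{\mathrm{fwd}}$ is the correct one.
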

 
 \begin{proof}
To begin with, note that $P(S^{n-2}_{\can}) = (1,0)$. Let $(0,1) = u$, and let $\phi(t)$ be the angle that the segment $uP \circ \gamma'_{t_{2}}(t)$ makes relative to the segment $uP \circ \gamma'_{t_{2}}(0)$. See Figure~\ref{compressedpath}.

Assume, for the sake of contradiction, that $\eta_{(1,0)}(P \circ \gamma'_{t_{2}}\mid_{[1 - \epsilon, 0]}) > 0$
Then, since $\sup \Theta_{\epsilon} < \theta_{v}$, it follows $\phi(\epsilon) < \theta_{v}$. Also, since $\inf \Theta'_{\epsilon} > \theta_{v}$, it follows that $\phi(1 - \epsilon) > \theta_{v}$. 
 
 \begin{figure}
    \centering
    \resizebox{0.3\linewidth}{!}{\input{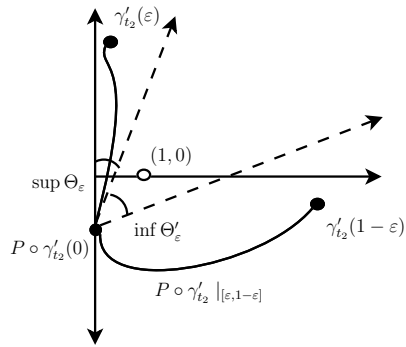}}
    \caption{The path $P \circ \gamma'_{t_{2}}\mid_{[1 - \epsilon, \epsilon]}$. This path cannot have a negative winding number, since that would contradict the fact that $\phi(t) \geq \inf \Theta'_{\epsilon}$ for all $t \in [1 - \epsilon, 0]$ and $\phi(t) \leq \sup \Theta_{\epsilon}$ for all $t \in [0, \epsilon]$.}
    \label{compressedpath}
\end{figure}

 As $\phi(1 - \epsilon) > \phi(\epsilon)$, if $\eta_{(1,0)}(P \circ \gamma'_{t_{2}}\mid_{[1 - \epsilon, 1 - \epsilon]}) > 0$ (i.e.\ if the net movement is anticlockwise around $p$) then $\phi(t) = \theta_{v}$ for some $t \in [\epsilon, 1 - \epsilon]$, in contradiction to the fact that $\phi(t) \geq \inf \Theta'_{\epsilon}$ for all $t \in [1 - \epsilon, 0]$ $\phi(t) \leq \sup \Theta_{\epsilon}$ for all $t \in [0, \epsilon]$.
 \end{proof}
 
 \begin{Definition}
Let $a_{n}: \mathbb{R}^n \rightarrow \mathbb{R}$ be the last-coordinate map, i.e.\ $a_{n}((r_{1}, r_{2}, \ldots, r_{n})) = r_{n}$. 
\end{Definition}
 
 \begin{Lemma}\label{above}
 $a_{2} \circ P \circ \gamma'_{t_{2}}(\epsilon) > 0$.
\end{Lemma}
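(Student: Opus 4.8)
The plan begins by unwinding the left-hand side. By the definition of $P$, the quantity $a_{2}\circ P\circ\gamma'_{t_{2}}(\epsilon)$ is the second coordinate of $P(\gamma'_{t_{2}}(\epsilon))\in\mathbb{R}_{\geq 0}\times\mathbb{R}$, which is just the last Cartesian coordinate of the point $\gamma'_{t_{2}}(\epsilon)=I_{t_{2}}(\gamma(\epsilon))\in\mathbb{R}^{n}$; so the task is to show that $I_{t_{2}}(\gamma(\epsilon))$ has positive last coordinate. Now $I_{t_{2}}$ is the composition of an isometry of $\mathbb{R}^{n}$ with a scaling by some $s_{t_{2}}>0$, and it carries the hyperplane $\Pi_{t_{2}}$ (the unique one spanned by the $(n-2)$-sphere $S_{t_{2}}$) onto the hyperplane $\mathbb{R}^{n}_{0}=\{a_{n}=0\}$ spanned by $S^{n-2}_{\can}$. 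Hence $x\mapsto a_{n}(I_{t_{2}}(x))$ equals $\pm s_{t_{2}}$ times the signed distance from $x$ to $\Pi_{t_{2}}$, the sign recording which of the two open half-spaces bounded by $\Pi_{t_{2}}$ is sent to $\{a_{n}>0\}$. I would fix $I_{t_{2}}$ (replacing it, if necessary, by its composition with a rotation of $\mathbb{R}^{n}$ that fixes $S^{n-2}_{\can}$ and swaps the two sides of $\mathbb{R}^{n}_{0}$ — a change that alters nothing in any homotopy statement about $\gamma'_{t_{2}}$) so that the half-space \emph{not} containing $o$ is the one mapped to $\{a_{n}>0\}$. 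The lemma then reduces to showing that $\gamma(\epsilon)$ lies strictly in the open half-space bounded by $\Pi_{t_{2}}$ that does not contain $o$.

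To prove this, I would locate $\Pi_{t_{2}}$ and $\gamma(\epsilon)$ relative to the line through $o$ and $p:=\gamma(t_{2})$. Since $\|p-o\|=d_{\min}/r$, the set $S_{t_{2}}=\tri(o,p)$ is the intersection of the sphere of radius $r\|p-o\|=d_{\min}$ about $o$ with the sphere of radius $r'\|p-o\|$ about $p$; thus $\Pi_{t_{2}}$ is their radical hyperplane, perpendicular to the line $op$, and a short computation with $\|q\|^{2}$ and the law of cosines $(r')^{2}=1+r^{2}-2r\cos\theta_{v}$ places the foot of $\Pi_{t_{2}}$ on that line at signed distance $d_{\min}\cos\theta_{v}$ from $o$ in the direction $\overrightarrow{op}$, while $o$ itself is at signed distance $0$. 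On the other hand $\gamma(\epsilon)\in J-N$, so from $d_{\min}=\tfrac{1}{2}\inf(\{o\},J-N)$ we get $\|\gamma(\epsilon)-o\|\geq 2d_{\min}$; and taking $t_{2}$ in the arc $(0,\epsilon)$ — which the intermediate value theorem allows, since $\|\gamma(t)-o\|$ runs from $0$ at $t=0$ to $\geq 2d_{\min}>d_{\min}/r$ at $t=\epsilon$ — the bound $\sup\Theta_{\epsilon}<\theta_{v}$ of Lemma~\ref{first}, taken in the limit $s\to\epsilon^{-}$ of $\Theta_{\epsilon}(t_{2},s)$, gives $\angle(\overrightarrow{op},\overrightarrow{o\gamma(\epsilon)})\leq\sup\Theta_{\epsilon}<\theta_{v}$. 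Since $\cos$ is strictly decreasing on $[0,\pi]$, the signed coordinate of $\gamma(\epsilon)$ along $\overrightarrow{op}$, namely $\|\gamma(\epsilon)-o\|\cos\angle(\overrightarrow{op},\overrightarrow{o\gamma(\epsilon)})$, strictly exceeds $\|\gamma(\epsilon)-o\|\cos\theta_{v}\geq 2d_{\min}\cos\theta_{v}>d_{\min}\cos\theta_{v}$ whenever $\cos\theta_{v}>0$. Hence, for $\theta_{v}<\pi/2$, the hyperplane $\Pi_{t_{2}}$ separates $o$ (signed coordinate $0$) from $\gamma(\epsilon)$ (signed coordinate $>d_{\min}\cos\theta_{v}>0$), and $\gamma(\epsilon)$ is on the side away from $o$, which is exactly what the reduction of the first paragraph needs.

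The step I expect to be the main obstacle is precisely this last inequality when $\theta_{v}\geq\pi/2$: then $\cos\theta_{v}\leq 0$, and the bound $\|\gamma(\epsilon)-o\|\geq 2d_{\min}$ pushes the projection in the wrong direction, so the argument does not conclude as stated. I would fix this by choosing $p=\gamma(t_{2})$ with $t_{2}$ in the opposite arc $(1-\epsilon,1)$ instead (again admissible by the intermediate value theorem, as $\|\gamma(t)-o\|$ runs from $0$ at $t=1$ to $\geq 2d_{\min}$ at $t=1-\epsilon$), so that $\epsilon$ and $t_{2}$ now lie in different arcs; the bound $\inf\Theta'_{\epsilon}>\theta_{v}$ then forces $\angle(\overrightarrow{op},\overrightarrow{o\gamma(\epsilon)})>\theta_{v}$, hence $\cos\angle(\overrightarrow{op},\overrightarrow{o\gamma(\epsilon)})<\cos\theta_{v}\leq 0$, and $\|\gamma(\epsilon)-o\|\geq 2d_{\min}$ now drives the signed coordinate of $\gamma(\epsilon)$ along $\overrightarrow{op}$ strictly below $d_{\min}\cos\theta_{v}$, again separating $\gamma(\epsilon)$ from $o$ and placing it on the far side of $\Pi_{t_{2}}$. (When $o$ is a smooth point this whole difficulty evaporates: one simply takes $\theta_{v}$ to be an acute angle of $\triangle$, which the hypothesis of Theorem~\ref{The1} permits since at a smooth point it holds for every vertex.)
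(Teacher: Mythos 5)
Your reduction and your acute-angle computation are correct, and they are considerably more careful than the paper's own proof of Lemma~\ref{above}, which consists of the assertion that if $a_{2}\circ P\circ\gamma'_{t_{2}}(\epsilon)\le 0$ then $\gamma'_{t_{2}}(\epsilon)$ would lie inside $B(o,d_{\min})$, contradicting Lemma~\ref{nhoodinsphere}; that first implication is never justified, the angle hypothesis of Lemma~\ref{first} is never invoked, and the transformed point $\gamma'_{t_{2}}(\epsilon)$ is compared with the untransformed ball $B(o,d_{\min})$. What you supply --- $\Pi_{t_{2}}$ is the radical hyperplane perpendicular to the line $op$ with foot at signed distance $d_{\min}\cos\theta_{v}$ from $o$, $\|\gamma(\epsilon)-o\|\ge 2d_{\min}$ because $\gamma(\epsilon)\in J-N$, and $\angle(\overrightarrow{op},\overrightarrow{o\gamma(\epsilon)})\le\sup\Theta_{\epsilon}<\theta_{v}$ once $t_{2}$ is pinned to the arc $(0,\epsilon)$ by the intermediate value theorem --- is exactly the missing content, and your explicit fixing of the orientation of $I_{t_{2}}$ addresses a real ambiguity: the paper declares the choice of $I_{t}$ irrelevant, yet the sign of $a_{n}\circ I_{t_{2}}$ is precisely what Lemma~\ref{above} asserts. (Note, incidentally, that your convention is forced to be the opposite of the one implicit in the figure for Lemma~\ref{antic1}, where the basepoint $P\circ\gamma'_{t_{2}}(0)$ is drawn on the positive vertical axis; since for acute $\theta_{v}$ the points $o$ and $\gamma(\epsilon)$ land on opposite sides of $\Pi_{t_{2}}$, at most one of these conventions can make the lemma literally true, which is a muddle in the paper rather than in your argument.)

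The one caveat is your handling of $\theta_{v}\ge\pi/2$. You are right that this is where the statement is fragile: for $\theta_{v}>\pi/2$ and $t_{2}\in(0,\epsilon)$ the hypotheses genuinely do not determine which side of $\Pi_{t_{2}}$ contains $\gamma(\epsilon)$ (its signed coordinate $\|\gamma(\epsilon)-o\|\cos\beta$ with $\beta\le\sup\Theta_{\epsilon}<\theta_{v}$ can fall on either side of $d_{\min}\cos\theta_{v}$ depending on how large $\|\gamma(\epsilon)-o\|$ is), so no orientation convention fixed in advance makes Lemma~\ref{above} a consequence of the hypotheses; this is a defect of the paper, not of your reconstruction, and for $\theta_{v}=\pi/2$ the side is determined but $o\in\Pi_{t_{2}}$, so ``the half-space not containing $o$'' must be reworded. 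However, your repair --- relocating $t_{2}$ to $(1-\epsilon,1)$ --- cannot simply be adopted: Lemmata~\ref{antic1} and~\ref{antic2} need the cone axis $\overrightarrow{o\gamma(t_{2})}$ to be an initial-arc direction, because only then do both bounds $\sup\Theta_{\epsilon}<\theta_{v}$ and $\inf\Theta'_{\epsilon}>\theta_{v}$ control the angles of the two arcs of $N$ against that axis; with $t_{2}\in(1-\epsilon,1)$ nothing in the hypotheses bounds the angle between $\overrightarrow{o\gamma(t_{2})}$ and the rest of the final arc, and the wedge constraints underlying the winding-number argument are lost. So your proof should be regarded as complete for $\theta_{v}\le\pi/2$ (modulo the rewording above), while the obtuse case is a gap to be flagged in the paper's setup rather than something that can be patched inside this single lemma.
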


\begin{proof}
 Assume that this is not true. Then, it would follow that $\|o-\gamma'_{t_{2}}(\epsilon)\| < \|o-u\|$, which implies that $\gamma'_{t_{2}}(\epsilon)$ lies in the interior of $B(o,d_{\min})$. This would, in turn, also imply that $J _ N$ (in whose closure $\gamma'_{t_{2}}(\epsilon)$ belongs) has a nonempty intersection with $B(o,d_{\min})$. However, this contradicts the definition of $N$ in Lemma~\ref{nhoodinsphere}.
\end{proof}
 
 \begin{Lemma}\label{antic2}
  $\eta_{(1,0)}(P \circ \gamma'_{t_{2}}\mid_{[\epsilon, 1 - \epsilon]}) < 0$.
 \end{Lemma}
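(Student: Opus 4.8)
The plan is to argue in close parallel with Lemma~\ref{antic1}, but now for the \emph{far} arc $g\colonequals P\circ\gamma'_{t_2}\mid_{[\epsilon,1-\epsilon]}$, using Lemma~\ref{out} to pin down where this arc can go.

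\emph{Localising $g$.} Note that $P$ carries the closed flat $(n-1)$--disk bounded by $S^{n-2}_{\can}$ inside $\mathbb{R}^n_0$ exactly onto the segment $\Lambda\colonequals[0,1]\times\{0\}$, while $P^{-1}(1,0)=S^{n-2}_{\can}$; thus $g$ avoids $\Lambda$ precisely when $\gamma'_{t_2}\mid_{[\epsilon,1-\epsilon]}$ never meets $\mathbb{R}^n_0$ in the closed disk bounded by $S^{n-2}_{\can}$. Since $\gamma$ is injective off its endpoints and $\epsilon<\tfrac12$ (we may take it so, as Lemma~\ref{first} is unaffected by shrinking $\epsilon$), the set $\Ima\gamma\mid_{[\epsilon,1-\epsilon]}$ is contained in $J-N$, so $\gamma'_{t_2}\mid_{[\epsilon,1-\epsilon]}$ has image in $I_{t_2}(J-N)$; by the $I_{t_2}$--image of Lemma~\ref{out} it meets $\mathbb{R}^n_0=I_{t_2}(\Pi_{t_2})$ only in the exterior of $S^{n-2}_{\can}$, and it never meets $S^{n-2}_{\can}$. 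Hence $\Ima g\cap\Lambda=\varnothing$, i.e.\ $g$ is a path in $U\colonequals(\mathbb{R}_{\ge0}\times\mathbb{R})\setminus\Lambda$.

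\emph{Reducing to the endpoints.} The region $U$ is simply connected --- a closed half--plane slit along a segment that issues from a boundary point --- and it lies in $\mathbb{R}^2\setminus\big((-\infty,1]\times\{0\}\big)$, on which the principal branch $\vartheta\colonequals\arg(\,\cdot\,-(1,0))$ is continuous with values in $(-\pi,\pi)$. Therefore $\vartheta\circ g$ is a continuous determination of the angle of $g$ about $(1,0)$, and
\[
\eta_{(1,0)}(g)=\tfrac{1}{2\pi}\big(\vartheta(g(1-\epsilon))-\vartheta(g(\epsilon))\big).
\]
By Lemma~\ref{above}, together with the same argument run at $1-\epsilon$, both $g(\epsilon)$ and $g(1-\epsilon)$ lie in the open upper half--plane, so $\vartheta(g(\epsilon)),\vartheta(g(1-\epsilon))\in(0,\pi)$; in particular $\eta_{(1,0)}(g)\in\big(-\tfrac12,\tfrac12\big)$, and it remains only to show $\vartheta(g(1-\epsilon))<\vartheta(g(\epsilon))$.

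\emph{Fixing the sign.} Here I would reuse the mechanism of Lemma~\ref{antic1}. The bounds $\sup\Theta_\epsilon<\theta_v<\inf\Theta'_\epsilon$ of Lemma~\ref{first}, transported by the isometry $I_{t_2}$ and then projected by $P$, place $g(\epsilon)$ strictly on the ``less than $\theta_v$'' side of the $\theta_v$--ray and $g(1-\epsilon)$ strictly on the ``greater than $\theta_v$'' side, where angles are measured about the reference point $u=(0,1)$ used in the proof of Lemma~\ref{antic1}. Moreover, by Lemma~\ref{nhoodinsphere} the path $g$ stays outside the projected ball $P\big(I_{t_2}(B(o,d_{\min}))\big)$, which by Lemma~\ref{sphereboundary} has $(1,0)$ on its boundary and which contains $u$ in its interior; so on the portion of the plane available to $g$, moving so as to increase the angle about $u$ forces the angle about $(1,0)$ to decrease. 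This yields $\vartheta(g(1-\epsilon))<\vartheta(g(\epsilon))$, hence $\eta_{(1,0)}(g)<0$; together with Lemma~\ref{antic1} this makes the winding number of the whole loop $P\circ\gamma'_{t_2}$ negative, hence nonzero, en route to Lemma~\ref{NotHomId}. I expect this last step --- transferring the angular data faithfully through the compression $P$, and verifying that $u$ and $(1,0)$ really do induce opposite orientations on everything $g$ can reach --- to be the main obstacle, and it is precisely where $\Ima g\cap\Lambda=\varnothing$ and Lemma~\ref{above} are used.
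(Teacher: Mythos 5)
Your first two steps are sound, and they are in fact a cleaner packaging of the paper's key ingredient: using Lemma~\ref{out} (together with the standing assumption $\gamma\cap S_t=\varnothing$) to show that $g\colonequals P\circ\gamma'_{t_2}\mid_{[\epsilon,1-\epsilon]}$ misses $\Lambda=[0,1]\times\{0\}$, and then noting that on $U=(\mathbb{R}_{\ge0}\times\mathbb{R})\setminus\Lambda$ there is a single-valued angle branch $\vartheta=\arg(\cdot-(1,0))\in(-\pi,\pi)$, so that $\eta_{(1,0)}(g)=\tfrac{1}{2\pi}\bigl(\vartheta(g(1-\epsilon))-\vartheta(g(\epsilon))\bigr)$. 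This legitimately replaces the paper's intermediate-value ``crossing'' argument by an endpoint computation.

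The gap is in the sign-fixing step, in two places. First, your claim that $g(1-\epsilon)$ lies in the open upper half-plane ``by the same argument'' as Lemma~\ref{above} is unjustified and can fail: the two ends of the arc are governed by different hypotheses ($\sup\Theta_\epsilon<\theta_v$ at the outgoing end, $\inf\Theta'_\epsilon>\theta_v$ at the incoming end), and nothing forces the incoming endpoint onto the positive-$a_n$ side --- e.g.\ at a smooth point the incoming branch near $o$ makes an angle close to $\pi$ with $\vv{o\gamma(t_2)}$, which puts $\gamma'_{t_2}(1-\epsilon)$ on the opposite side of $\mathbb{R}^n_0$ from $\gamma'_{t_2}(\epsilon)$; the paper's own Figure~\ref{compressedpath1} draws $P\circ\gamma'_{t_2}(1-\epsilon)$ below the horizontal axis. (Ironically, in that situation your framework finishes instantly, since $\vartheta(g(\epsilon))\in(0,\pi)$ and $\vartheta(g(1-\epsilon))\in(-\pi,0)$ give $\eta<0$; but your written argument assumes the opposite location and so does not cover it.) Second, in the case you do treat, the entire content of the lemma is the strict inequality $\vartheta(g(1-\epsilon))<\vartheta(g(\epsilon))$, and the mechanism you propose --- that on the region available to $g$, increasing the angle about $u=(0,1)$ forces the angle about $(1,0)$ to decrease --- is not proved and is false as stated: a large counterclockwise circular arc in $\{d\ge0\}$, staying outside $\Lambda$ and outside the projected ball, increases both angles simultaneously. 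You flag this yourself as the main obstacle, so the sign, and hence the lemma, is not established. For comparison, the paper closes this step by assuming $\eta_{(1,0)}(g)>0$, using $\phi(\epsilon)<\tfrac{\pi}{2}<\phi(1-\epsilon)$ (from Lemma~\ref{above} and the hypothesis on $\Theta'$) and the intermediate value theorem to produce a point of $g$ on $\{(a,0):0\le a<1\}$, i.e.\ a point of $J-N$ in the interior of $S^{n-2}_{\can}$, contradicting Lemma~\ref{out}.
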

 
 \begin{proof}
    Let $p_{0} = (a,0)$, where $0 \leq a < 1$. Then, the angle that $up_{0}$ makes against the positive vertical axis from $u$ is $\frac{\pi}{2}$. From Lemma~\ref{above}, we infer that $\phi(\epsilon) < \frac{\pi}{2}$, and thus that $\phi(\epsilon) - \frac{\pi}{2} < 0$. See Figure~\ref{compressedpath1}. 
    
       \begin{figure}
    \centering
    \resizebox{0.3\linewidth}{!}{\input{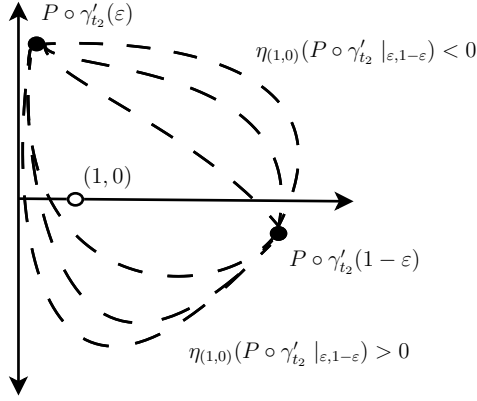}}
    \caption{$\eta_{(1,0)}(P \circ \gamma'_{t_{2}}\mid_{\epsilon, 1 - \epsilon}) > 0$ if and only if the curve passes $(0,1)$ by the left, whilst $\eta_{(1,0)}(P \circ \gamma'_{t_{2}}\mid_{\epsilon, 1 - \epsilon}) < 0$ if and only if it passes it from the right.}
    \label{compressedpath1}
\end{figure}
    
    Since we also know that $\frac{\pi}{2} - \phi(1 - \epsilon) < 0$, it follows that if $\eta_{(1,0)}(P \circ \gamma'_{t_{2}}\mid_{[1 - \epsilon, \epsilon]}) > 0$, then there is some $t' \in [\epsilon, 1 - \epsilon]$ such that $\phi(t') = \frac{\pi}{2}$, i.e. that $P \circ \gamma'_{t_{2}}(t') = (a,0)$ for some $0 \leq a < 1$. However, it would then follow that $\gamma'_{t_{2}}(t') \in \gamma'_{t_{2}}\mid_{[\epsilon, 1 - \epsilon]} = J - N$ lies within the interior of $S^{n-2}_{can}$, which is in contradiction to Lemma~\ref{out}. 
 \end{proof}
 
 \begin{Lemma}\label{Flatnothom}
 $P \circ \gamma'_{t_{2}} \not\simeq P \circ c' \colonequals c''$ in $P(X) = \mathbb{R}^{2} - (1,0)$.
 \end{Lemma}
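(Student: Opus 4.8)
The plan is to translate the statement into one about winding numbers about $(1,0)$ and then simply add up the contributions already computed in the preceding lemmata. First note that $P^{-1}(\{(1,0)\})$ consists precisely of those $(r_{1},\dots,r_{n})$ with $r_{1}^{2}+\cdots+r_{n-1}^{2}=1$ and $r_{n}=0$, i.e.\ $P^{-1}(\{(1,0)\})=S^{n-2}_{\can}$. Hence $P$ carries $X=\mathbb{R}^{n}-S^{n-2}_{\can}$ into the punctured half-plane $(\mathbb{R}_{\geq0}\times\mathbb{R})-\{(1,0)\}$ (which the paper abbreviates as $\mathbb{R}^{2}-(1,0)$); in particular, since by hypothesis $\gamma$, and therefore $\gamma'_{t_{2}}=I_{t_{2}}(\gamma)$, is disjoint from $S^{n-2}_{\can}$, the loop $P\circ\gamma'_{t_{2}}$ is disjoint from $(1,0)$, so its winding number $\eta_{(1,0)}(P\circ\gamma'_{t_{2}})$ is defined. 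The punctured half-plane is homotopy equivalent to $S^{1}$, so a loop in it is freely null-homotopic if and only if its winding number about $(1,0)$ vanishes; since $c''=P\circ c'$ is constant, $\eta_{(1,0)}(c'')=0$, and it therefore suffices to show $\eta_{(1,0)}(P\circ\gamma'_{t_{2}})\neq 0$.

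Next, regard $\gamma'_{t_{2}}$ as a loop parametrised by $[0,1]$ with $\gamma'_{t_{2}}(0)=\gamma'_{t_{2}}(1)=I_{t_{2}}(o)$, and split it at the parameters $\epsilon$ and $1-\epsilon$ into the arc $\gamma'_{t_{2}}\mid_{[1-\epsilon,\epsilon]}$ passing through the basepoint (the parameter range $[1-\epsilon,1]\cup[0,\epsilon]$, glued at $0\sim1$) and the complementary arc $\gamma'_{t_{2}}\mid_{[\epsilon,1-\epsilon]}$. These two arcs have the same pair of endpoints, $\gamma'_{t_{2}}(\epsilon)$ and $\gamma'_{t_{2}}(1-\epsilon)$, and their concatenation, up to a cyclic reparametrisation which does not affect the winding number of a loop, is $P\circ\gamma'_{t_{2}}$ itself. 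Since the continuous angle function defining $\eta_{(1,0)}$ is additive along a concatenation of paths avoiding $(1,0)$, we obtain
\[
\eta_{(1,0)}\bigl(P\circ\gamma'_{t_{2}}\bigr)=\eta_{(1,0)}\bigl(P\circ\gamma'_{t_{2}}\mid_{[1-\epsilon,\epsilon]}\bigr)+\eta_{(1,0)}\bigl(P\circ\gamma'_{t_{2}}\mid_{[\epsilon,1-\epsilon]}\bigr).
\]
By Lemma~\ref{antic1} the first term on the right is strictly negative, and by Lemma~\ref{antic2} so is the second; hence the left-hand side is strictly negative. (It is moreover a negative integer, being the winding number of a closed loop, so it is $\leq-1$.) In particular $\eta_{(1,0)}(P\circ\gamma'_{t_{2}})\neq 0=\eta_{(1,0)}(c'')$, and therefore $P\circ\gamma'_{t_{2}}\not\simeq c''$ in $P(X)$.

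I expect the only delicate point to be the additivity/decomposition step: one must be careful to concatenate the two arcs in the correct order, with the shared endpoints matched up, so that the result is genuinely the loop $\gamma'_{t_{2}}$ up to a basepoint shift, and one must invoke the fact that the winding number of a loop is unchanged by such a shift. Once that bookkeeping is in place, the lemma is an immediate consequence of Lemmata~\ref{antic1} and~\ref{antic2} together with the classification of loops in the punctured plane by winding number; no further computation is required.
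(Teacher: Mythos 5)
Your proof is correct and follows essentially the same route as the paper: decompose $\eta_{(1,0)}(P\circ\gamma'_{t_{2}})$ into the contributions of $P\circ\gamma'_{t_{2}}\mid_{[1-\epsilon,\epsilon]}$ and $P\circ\gamma'_{t_{2}}\mid_{[\epsilon,1-\epsilon]}$, invoke Lemmata~\ref{antic1} and~\ref{antic2} to see both are negative, and conclude non-nullhomotopy from the nonvanishing winding number. Your additional remarks (that $P^{-1}(\{(1,0)\})=S^{n-2}_{\can}$ so the winding number is defined, and that nullhomotopy in the punctured target is equivalent to vanishing winding number) only make explicit what the paper leaves implicit.
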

 
 \begin{proof}
Since $\eta_{(1,0)}(P \circ \gamma'_{t_{2}}) = \eta_{(1,0)}(P \circ \gamma'_{t_{2}}\mid_{[1 - \epsilon, \epsilon]}) + \eta_{(1,0)}(P \circ \gamma'_{t_{2}}\mid_{[\epsilon, 1 - \epsilon]})$, Lemmata~\ref{antic1} and~\ref{antic2} imply that $\eta_{(1,0)}(P \circ \gamma'_{t_{2}}) \neq 0$, which is true if and only if this curve is not nullhomotopic.
\end{proof}

\begin{proof}[Proof of Lemma~\ref{NotHomId}]
Assume that $\gamma'_{t_{2}} \simeq c'$. Let $H: [0,1] \times [0,1] \rightarrow \mathbb{R}_{\geq 0} \times \mathbb{R}$ be a homotopy such that $H(0, \cdot) = \gamma'_{t_{2}}$ and $H(1, \cdot)  = c'$. Then, the homotopy $P \circ H$ takes $P \circ H(0, \cdot) = P \circ \gamma'_{t_{2}}$ to the constant path $P \circ H(1, \cdot) = c''$. However, from Lemma~\ref{Flatnothom}, we know that this is impossible.
\end{proof}

Therefore, Lemmata~\ref{HomId} and~\ref{NotHomId} immediately imply Theorem~\ref{The1}.

As a corollary of independent interest of this theorem, we get the following.

\begin{Theorem}\label{free}
If $o \in J$ is a differentiable point, then any triangle $\triangle$ can be inscribed in $J$.
\end{Theorem}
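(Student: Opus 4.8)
The plan is to deduce Theorem~\ref{free} directly from Theorem~\ref{The1}: I will show that differentiability of $\gamma$ at $o$ forces $\limsup_{\delta\to0^+}\Theta_\delta=0$ and $\liminf_{\delta\to0^+}\Theta'_\delta=\pi$, so that the angle $\theta_v$ of \emph{every} vertex $v$ of $\triangle$ --- being strictly between $0$ and $\pi$ --- automatically satisfies $\limsup_{\delta\to0^+}\Theta_\delta<\theta_v<\liminf_{\delta\to0^+}\Theta'_\delta$. The theorem then follows by applying Theorem~\ref{The1} to any one vertex $v$.

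First I would set up the local picture at $o$. Parametrize so that $\gamma(0)=\gamma(1)=o$, and let $\ell$ be the tangent line to $J$ at $o$, with unit direction vector $u$. Injectivity of $\gamma$ on $[0,1)$ makes $\gamma((0,\delta))$ and $\gamma((1-\delta,1))$ the two distinct local branches of $J$ at $o$, and the first-order expansions $\gamma(s)-o=s\,\gamma'(0^+)+o(s)$ as $s\to0^+$ and $\gamma(s)-o=(s-1)\gamma'(1^-)+o(1-s)$ as $s\to1^-$, together with $s-1<0$, show that the unit vectors $\tfrac{\gamma(s)-o}{\|\gamma(s)-o\|}$ converge to $u$ as $s\to0^+$ and to $-u$ as $s\to1^-$.

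Next I would push these limits through the angle. Since the angle $\angle(\mathbf a,\mathbf b)$ between nonzero vectors depends continuously on $(\mathbf a,\mathbf b)$, we get $\Theta_\delta(s,s')=\angle(\gamma(s)-o,\gamma(s')-o)\to\angle(u,u)=0$ uniformly over $(s,s')\in(0,\delta)^2$ as $\delta\to0^+$, so $\sup\Theta_\delta\to0$ and hence $\limsup_{\delta\to0^+}\Theta_\delta=0$; likewise $\Theta'_\delta(s,s')\to\angle(-u,u)=\pi$ uniformly as $\delta\to0^+$, so $\inf\Theta'_\delta\to\pi$ and $\liminf_{\delta\to0^+}\Theta'_\delta=\pi$. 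Because every interior angle of a triangle lies in $(0,\pi)$, for any vertex $v$ we have $0=\limsup_{\delta\to0^+}\Theta_\delta<\theta_v<\pi=\liminf_{\delta\to0^+}\Theta'_\delta$, and Theorem~\ref{The1} produces $p,q\in J\setminus\{o\}$ with $\triangle opq$ similar to $\triangle$; in particular $\triangle$ is inscribed in $J$.

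The one point that needs care is the claim that the two local branches of $J$ at a differentiable point are asymptotic to opposite rays of $\ell$, i.e.\ that $\tfrac{\gamma(s)-o}{\|\gamma(s)-o\|}$ tends to $+u$ from one side and $-u$ from the other, rather than to the same limit. This is immediate if ``differentiable point'' is read as: $\gamma$ is differentiable at $0$ and at $1$ with nonzero derivative, and $\gamma'(1^-)$ a positive multiple of $\gamma'(0^+)$ (the natural requirement for a regular loop). If one only assumes $J$ possesses a tangent line at $o$, the same conclusion still holds because the two branches are distinct (by injectivity of $\gamma$ on $[0,1)$) and a set admitting a tangent line cannot have both local branches asymptotic to the same ray; spelling this out carefully is the only nonroutine part, everything else being a direct invocation of Theorem~\ref{The1}.
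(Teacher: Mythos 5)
Your proof is correct and is essentially the paper's own argument: the paper also deduces Theorem~\ref{free} from Theorem~\ref{The1} by asserting that at a differentiable point $\limsup_{\delta\to0^{+}}\Theta_{\delta}=0$ and $\liminf_{\delta\to0^{+}}\Theta'_{\delta}=\pi$, so that every vertex angle $\theta_{v}\in(0,\pi)$ satisfies the hypothesis; you merely supply the first-order-expansion justification that the paper leaves implicit. The only shaky point is your closing aside that a mere tangent line forces the two branches toward opposite rays --- a cusp such as $t\mapsto(t^{2},t^{3})$ has both branches asymptotic to the same ray --- but under the intended reading of ``differentiable point'' as a regular point with $\gamma'(0^{+})=\gamma'(1^{-})\neq 0$ your main argument is exactly what the paper does.
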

\begin{proof}
When $o$ is differentiable, $\limsup_{\delta\rightarrow0^{+}}\Theta_{\delta} = 0$ and $\liminf_{\delta\rightarrow0^{+}}\Theta'_{\delta} = \pi$. Then, the angle $\theta_{v}$ of any vertex of $\triangle$ always satisfies $\limsup_{\delta\rightarrow0^{+}}\Theta_{\delta} < \theta_{v} < \liminf_{\delta\rightarrow0^{+}}\Theta'_{\delta}$. Thus, $\triangle$ always satisfies the requirements of Theorem~\ref{The1} to be inscribed in $J$.
\end{proof}

\section{Generalising theorem~\ref{Mey}}\label{last}
We now prove that subject to certain restrictions, Theorem~\ref{Mey} can be generalised to any given Jordan curve $J \subset \mathbb{R}^{n}$. We state this condition before stating our generalisation.
\begin{Definition}
     For any given $o \in J$, and any $p \in J - o$, let $a_{p}: [0,1] \rightarrow \mathbb{R}$ be defined as 
     $$a_{p}(t) = \frac{\vv{o\gamma(t)}}{\|o\gamma(t)\|} \cdot \vv{op}.$$ 
     If there is some $\epsilon > 0$ such that $a_{p}\mid_{(1 - \epsilon, \epsilon)}$ is monotone for each $p \in \Ima \gamma\mid_{(1 - \epsilon, \epsilon)} - o$, we say that $J$ is \emph{strongly monotone} at $o$. Here, $\gamma\mid_{(1 - \epsilon, \epsilon)}$ is called a \emph{strongly locally monotone neighbourhood} of $o$.
\end{Definition}

Examples wherein similar conditions have been used to prove inscription theorems can be found in, for example,~\cite{stromquist_1989}. See Figure~\ref{sharpedge} for examples of neighbourhoods that both satisfy and fall outside this condition.

\begin{figure}
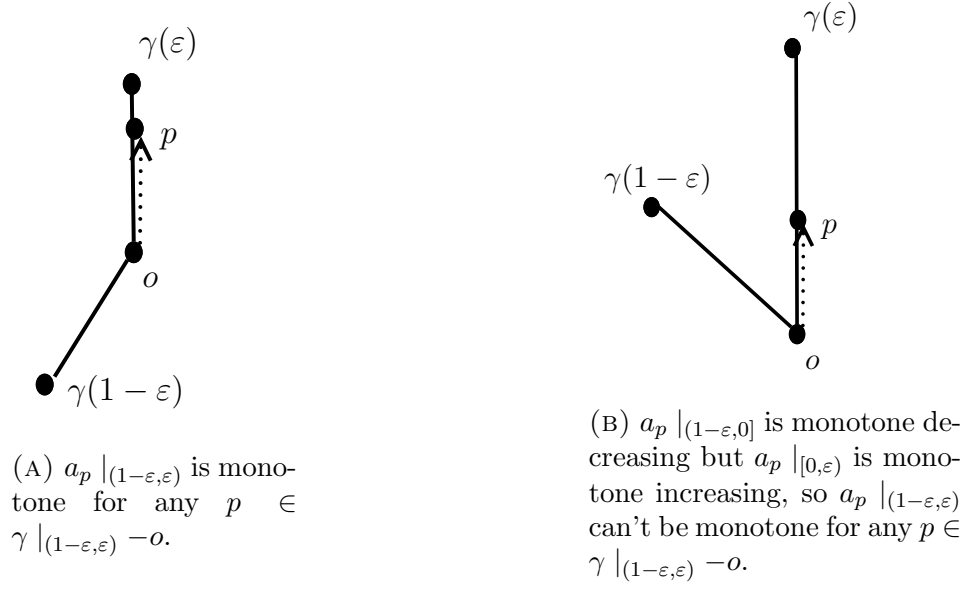

\hfill
\begin{subfigure}{0.23\textwidth}
\resizebox{0.9\linewidth}{!}{\input{notsharp.tex}} 
\caption{$a_{p}\mid_{(1 - \epsilon, \epsilon)}$ is monotone for any $p \in \gamma\mid_{(1 - \epsilon, \epsilon)} - o$.}
\label{sharpdia}
\end{subfigure}
\hfill
    \begin{subfigure}{0.3\textwidth}
\resizebox{0.9\linewidth}{!}{\input{sharp.tex}} 
\caption{$a_{p}\mid_{(1 - \epsilon,0]}$ is monotone decreasing but $a_{p}\mid_{[0,\epsilon)}$ is monotone increasing, so $a_{p}\mid_{(1 - \epsilon, \epsilon)}$ can't be monotone for any $p \in \gamma\mid_{(1 - \epsilon, \epsilon)} - o$.}
\label{notsharpdia}
\end{subfigure}
\hfill
\caption{$J$ is strongly locally monotone at $o$ in Figure~\ref{sharpdia}, and is not in Figure~\ref{notsharpdia}.}
\label{sharpedge}
\end{figure}

\begin{Theorem}\label{General}
If $J$ is strongly locally monotone at $o$, then the point $o \in J$ has an inscribed inscribed equilateral triangle. 
\end{Theorem}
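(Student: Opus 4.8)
The plan is to re-run the proof of Theorem~\ref{The1} for the equilateral triangle, with strong local monotonicity at $o$ taking over the role played there by the angular hypothesis. For an equilateral $\triangle$ every vertex angle equals $\theta_{v}=\pi/3$ and the two ratios defined just before Theorem~\ref{The1} are $r=r'=1$; in particular $r\ge 1$, so Lemmata~\ref{cont} and~\ref{HomId} apply verbatim. So I would suppose, for contradiction, that $\gamma\cap S_{t}=\varnothing$ for every $t\in(0,1)$; then Lemma~\ref{cont} makes all the loops $\gamma'_{t}$ freely homotopic in $X$, and Lemma~\ref{HomId} supplies a $t_{1}$ with $\gamma'_{t_{1}}\simeq c'$. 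It then remains only to produce some $t_{2}\in(0,1)$ with $\gamma'_{t_{2}}\not\simeq c'$, since this is the sole point at which the proof of Theorem~\ref{The1} invoked the angular hypothesis (inside Lemma~\ref{NotHomId}), and it is precisely here that strong local monotonicity must enter.

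For the construction of $t_{2}$ I would fix a strongly locally monotone neighbourhood $\gamma\mid_{(1-\epsilon,\epsilon)}$ with image $N$. Lemmata~\ref{nhoodinsphere} through~\ref{out} go through unchanged: one gets $d_{\min}>0$ with $J\cap B(o,d_{\min})\subset N$ and, using $r=1$, a $t_{2}\in(0,\epsilon)$ with $\|\gamma(t_{2})-o\|=d_{\min}$, so that $S_{t_{2}}\subset\partial B(o,d_{\min})$ and all of $(J-N)\cap\Pi_{t_{2}}$ lies in the exterior of $S_{t_{2}}$. Applying $I_{t_{2}}$ and then the compression $P$ yields a loop $P\circ\gamma'_{t_{2}}$ in $P(X)=\mathbb{R}^{2}-(1,0)$, which I would split as before into the complement arc $P\circ\gamma'_{t_{2}}\mid_{[\epsilon,1-\epsilon]}$ and the neighbourhood arc $P\circ\gamma'_{t_{2}}\mid_{[1-\epsilon,\epsilon]}$. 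The complement arc avoids the whole closed segment between $(0,0)$ and $(1,0)$ (the image under $P$ of the closed region bounded by $S^{n-2}_{\can}$) by the standing assumption $\gamma\cap S_{t_{2}}=\varnothing$ together with Lemma~\ref{out}; hence, by a verbatim copy of the proof of Lemma~\ref{antic2} (which used only Lemmata~\ref{above} and~\ref{out}, never the angular hypothesis), it contributes a strictly negative amount to the winding number $\eta_{(1,0)}$.

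The genuinely new step replaces Lemma~\ref{antic1}: strong local monotonicity must be used to bound the neighbourhood arc. Writing $\phi(t)$ for the angular position of $P\circ\gamma'_{t_{2}}(t)$ as seen from the reference point $u$, I would argue that the monotonicity of each $a_{p}\mid_{(1-\epsilon,\epsilon)}$, carried across the angle-preserving map $I_{t_{2}}$, forces $\phi$ to be monotone on each of the two halves $[0,\epsilon)$ and $(1-\epsilon,1]$, and that together with the analogue of Lemma~\ref{above} at both endpoints this prevents the neighbourhood arc from making any net positive loop around $(1,0)$. Summing the two contributions then gives $\eta_{(1,0)}(P\circ\gamma'_{t_{2}})<0$, so $P\circ\gamma'_{t_{2}}\not\simeq c''$ in $P(X)$; pushing a hypothetical homotopy $\gamma'_{t_{2}}\simeq c'$ forward by $P$, as in the proof of Lemma~\ref{NotHomId}, yields $\gamma'_{t_{2}}\not\simeq c'$. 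This contradicts $\gamma'_{t_{1}}\simeq c'$ and Lemma~\ref{cont}, so some $S_{t}$ must meet $J$, and the corresponding points $o,p,q$ form the desired inscribed equilateral triangle with vertex $o$.

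I expect the real difficulty to be the middle step of the last paragraph: making precise exactly how the monotonicity of the functions $a_{p}$ in $\mathbb{R}^{n}$ descends, after the scaled isometry $I_{t_{2}}$ and the projection $P$, to monotonicity of the planar angle $\phi$ on each half-neighbourhood, and checking that this monotonicity is just enough to forbid the neighbourhood arc from undoing the negative winding supplied by the complement arc. One will also need to dispose of degenerate configurations (for instance the two branches of $\gamma$ at $o$ lying along a common ray), and may have to shrink $\epsilon$, or pass to the limit $t_{2}\to 0^{+}$, in order to make the angular estimates uniform.
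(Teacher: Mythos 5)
Your adaptation stands or falls on the step you yourself flag as the ``real difficulty,'' and that step is not just unproven but false: strong local monotonicity cannot play the role of the angular hypothesis in Lemma~\ref{antic1}. The winding number of $P\circ\gamma'_{t_{2}}$ about $(1,0)$ records the net signed number of times $\gamma$ passes through the disc bounded by $S_{t_{2}}$ inside $\Pi_{t_{2}}$. Now take $J$ to have a long, straight, very thin spike with tip $o$, the two branches leaving $o$ at a tiny angle (say $10^{\circ}$) to each other, closed up by a distant loop. This curve is strongly locally monotone at $o$ (for each fixed $p$ the function $a_{p}$ is essentially a two--valued step function on $(1-\epsilon,\epsilon)$, hence monotone), but for the $t_{2}$ produced by Lemma~\ref{sphereboundary} the sphere $S_{t_{2}}$ sits in the hyperplane perpendicular to $\vv{o\gamma(t_{2})}$ at distance $d_{\min}/2$, with radius $\tfrac{\sqrt{3}}{2}d_{\min}$, and \emph{both} branches of the spike cross that hyperplane well inside $S_{t_{2}}$, once in each direction; the two crossings cancel, $J-N$ meets $\Pi_{t_{2}}$ only outside $S_{t_{2}}$ (Lemma~\ref{out}), and so $\eta_{(1,0)}(P\circ\gamma'_{t_{2}})=0$. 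No contradiction can be extracted at your $t_{2}$, and nothing in your argument produces any other $t$ with nonzero winding. The hypothesis of Theorem~\ref{The1} is exactly the angular separation that rules this out: it forces $\gamma(\epsilon)$ inside the cone of directions at angle $\theta_{v}$ about $\vv{o\gamma(t_{2})}$ and $\gamma(1-\epsilon)$ outside it, so that only one of the two local strands pierces the disc; strong local monotonicity carries no such angular information (it is perfectly compatible with the two branches being nearly parallel, which is the situation Theorem~\ref{General} is meant to cover). A smaller issue: even your claim that Lemma~\ref{antic2} transfers ``verbatim'' needs care, since its proof uses the endpoint condition $\phi(1-\epsilon)>\pi/2$, which you would have to re-derive in your setting.

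For comparison, the paper does not touch the $S_{t}$, $I_{t}$, $X$ machinery at all in Section 5. It parametrises by the \emph{second} vertex: for each $s$ it forms the ratio path $R_{s}(t)=(r_{1,s}(t)-1,\,r_{2,s}(t)-1)$ in the plane, so that an inscribed equilateral triangle at $o$ is exactly a pair $(s',t')$ with $R_{s'}(t')=(0,0)$. Assuming no $R_{s}$ meets the origin, all the $R_{s}$ are homotopic in $\mathbb{R}^{2}-(0,0)$ (Lemmata~\ref{prefalse} and~\ref{false}); taking $s_{1}$ at the point of $J$ farthest from $o$ confines $R_{s_{1}}$ to the closed left half-plane (Lemma~\ref{negativex}), and taking $s_{2}$ very close to $o$ inside the monotone neighbourhood --- the only place strong local monotonicity is used --- keeps $R_{s_{2}}$ out of the open third quadrant, because no point of the curve can lie simultaneously within $d$ of $o$ and of $\gamma(s_{2})$ (Lemma~\ref{positivequads}); the loop $R_{s_{1}}\ast\bar{R}_{s_{2}}$ then has winding number $1$ about the origin (Lemma~\ref{true}), contradicting Lemma~\ref{false}. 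If you wish to salvage your route, you would need to exhibit, from monotonicity alone, some parameter at which the linking of $\gamma$ with the corresponding sphere is forced to be nonzero; the spike example shows the near-$o$ parameter is not it.
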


Note that, in comparison to Theorem~\ref{The1} for an equilateral triangle, this theorem is stronger, since it allows for an unaccountably infinite number of equilateral triangles to be inscribed at the strongly monotone neighbourhood containing the point $o$, since any point of that neighbourhood can be chosen as the first point for the equilateral triangle. By contrast, finding a suitable neighbourhood that satisfies the hypothesis of Theorem~\ref{The1} only ensures at least a single triangle of the desired specifications can be inscribed in it. 

For the proof, our first concern is how we identify when $o$ inscribes an equilateral triangle. Let $\phi_{s}:[0,1] \rightarrow [0,s]$ be defined by $\phi_{s}(t) = st$. For all $s \in (0,1)$, let $r_{1,s}: [0,1] \rightarrow \mathbb{R}_{\geq 0}$ and $r_{2,s}: [0,1] \rightarrow \mathbb{R}_{\geq 0}$ be respectively defined by 
\[ r_{1,s}(t) = \frac{\| \gamma \circ \phi_{s}(t) - o \|}{\| \gamma(s) - o \|} \text{   and   } r_{2,s}(t) = \frac{\| \gamma \circ \phi_{s}(t) - \gamma(s) \|}{\| \gamma(s) - o \|},\] where, for any $t \in [0,1]$, $r_{1,s}(t)$ and $r_{2,s}(t)$ respectively represent the ratio of the lengths of the sides $o\gamma \circ \phi_{s}(t)$ and $\gamma(s)\gamma \circ \phi_{s}(t)$ to the side $o\gamma(s)$. Then, the \emph{ratio path} $R_{s}: [0,1] \rightarrow \mathbb{R}^{2}$ for any $s \in (0,1)$ is
$$R_{s}(t) = (r_{1,s}(t) - 1, r_{2,s}(t) - 1).$$
To prove Theorem~\ref{General}, then, it suffices to show that, for some $s',t' \in (0,1)$, $r_{1,s'}(t') = r_{2,s'}(t') = 1$, and thus that $R_{s'}(t') = (0,0)$. Note that $R_{s}(0) = (-1,0)$ and $R_{s}(1) = (0,-1)$.

\begin{Definition}
For any pair of paths $f,g$, let $f\ast g$ denote the concatenation of the paths $f,g$.
\end{Definition}

\begin{Definition}
For any path $f$, let $\bar{f}$ denote the inverse of $f$, defined by $\bar{f}(t) = f(1 - t)$.
\end{Definition}

\begin{Definition}
For any $s,s' \in (0,1)$, let the loop $L_{s,s'}$ be defined by the equation $L_{s,s'} \colonequals R_{s} \ast \bar{R}_{s'}$.
\end{Definition}
 The approach is as follows. Let $c''$ denote the constant path at $R_{s}(0) = (-1,0)$. To begin, we use Lemma~\ref{prefalse} to prove Lemma~\ref{false}, which states that the loop $L_{s,s'} \simeq c''$ in $\mathbb{R}^{2} - (0,0)$ for any $s,s' \in (0,1)$ if $R_{s}$ doesn't contain $(0,1)$ for any $s \in (0,1)$. Then, we combine two technical lemmata---namely, Lemmata~\ref{negativex} and~\ref{positivequads}---to prove Lemma~\ref{true}, which states that $L_{s_{1},s_{2}} \not\simeq c''$ if $R_{s}$ doesn't contain $(0,0)$ for any $s \in (0,1)$. However, from Lemma~\ref{false}, we also know that $L_{s_{1},s_{2}} \simeq c''$ if $R_{s}$ doesn't contain $(0,0)$ for any $s \in (0,1)$. Since both of these results follow from the same hypothesis---namely, that $R_{s}$ doesn't contain $(0,0)$ for any $s \in (0,1)$---we know then that this hypothesis is false. A proof of Theorem~\ref{General} then immediately follows.
\begin{Lemma}\label{prefalse}
Assume $R_{s}(t)$ does not contain $(0,0)$ for any $s \in (0,1)$. Then, $R_{s} \simeq R_{s'}$ in $\mathbb{R}^{2} - (0,0) \colonequals X'$ for any $s,s' \in (0,1)$.
\end{Lemma}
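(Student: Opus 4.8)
The plan is to mirror the proof of Lemma~\ref{cont} almost verbatim: we exhibit an explicit homotopy obtained by sliding the parameter $s$ linearly to $s'$, and observe that every intermediate path is itself one of the ratio paths $R_{s''}$, hence — by the standing hypothesis — avoids $(0,0)$ and so lands in $X'$.

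Concretely, for $u \in [0,1]$ set $s_{u} \colonequals (1-u)s + us'$, which lies in $(0,1)$ for every $u$ since it is a convex combination of $s,s' \in (0,1)$, with $s_{0} = s$ and $s_{1} = s'$. Define $F \colon [0,1] \times [0,1] \to \mathbb{R}^{2}$ by $F(u,t) = R_{s_{u}}(t)$. Then $F(0,\cdot) = R_{s}$ and $F(1,\cdot) = R_{s'}$; moreover, since $R_{s'}(0) = (-1,0)$ and $R_{s'}(1) = (0,-1)$ regardless of the value of $s'$, the map $F$ fixes both endpoints throughout, so it is in particular a homotopy rel endpoints. For each fixed $u$, the path $F(u,\cdot) = R_{s_{u}}$ is one of the ratio paths, and by hypothesis $R_{s_{u}}$ does not pass through $(0,0)$; hence $F$ maps into $X' = \mathbb{R}^{2} - (0,0)$, and therefore $R_{s} \simeq R_{s'}$ in $X'$.

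The only point requiring care is the continuity of $F$, i.e.\ the joint continuity of $(s,t) \mapsto R_{s}(t)$ on $(0,1) \times [0,1]$. This follows on unwinding the definitions: $R_{s}(t) = (r_{1,s}(t) - 1,\, r_{2,s}(t) - 1)$ with $r_{1,s}(t) = \|\gamma(st) - o\|/\|\gamma(s) - o\|$ and $r_{2,s}(t) = \|\gamma(st) - \gamma(s)\|/\|\gamma(s) - o\|$, the maps $(s,t) \mapsto \gamma(st)$ and $s \mapsto \gamma(s)$ are continuous as composites of continuous maps, the norm is continuous, and the denominator $\|\gamma(s) - o\|$ is continuous and strictly positive on $(0,1)$ because $\gamma$ is injective there with $\gamma(0) = o$. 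As the image of $u \mapsto s_{u}$ is a compact subinterval of $(0,1)$, the denominator is in fact bounded below by a positive constant along the homotopy, so no division-by-zero arises. Hence $F$ is continuous, and the lemma follows. I do not expect any genuine obstacle here; the continuity bookkeeping just described is the only nontrivial point, and it is the exact analogue of the verification already carried out for Lemma~\ref{cont}.
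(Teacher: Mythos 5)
Your proposal is correct and is essentially the paper's own argument: the paper also takes the straight-line interpolation $s_{u}=(1-u)s+us'$ and uses the homotopy $F(u,\cdot)=R_{s_{u}}$, each slice of which is a ratio path avoiding $(0,0)$ by hypothesis. Your added verification of joint continuity of $(s,t)\mapsto R_{s}(t)$ (positivity of the denominator $\|\gamma(s)-o\|$ on a compact subinterval of $(0,1)$) is a detail the paper leaves implicit, but it does not change the route.
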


\begin{proof}
Let $F'_{s,s'}(\cdot, T) \colonequals R_{s(1 - T) + s'T}$. This is a homotopy taking $R_{s}$ (at $T = 0$) to $R_{s'}$ ($T = 1$). Then, for each $T \in [0,1]$, $F'_{s,s'}(\cdot, T)$ is simply $R_{s''}$ for some $s'' \in [s,s']$. Thus, if no curve $R_{s''}$ intersects $(0,0)$, then $F_{s,s'}$ is the desired homotopy taking $R_{s}$ to $R_{s'}$ in $X'$.
\end{proof}

\begin{Lemma}\label{false}
Assume $R_{s}$ not contain $(0,0)$ for any $s \in (0,1)$. Then $L_{s,s'} \simeq c''$ in $X'$ for any $s,s' \in (0,1)$.
\end{Lemma}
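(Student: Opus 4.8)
The plan is to reduce this free-homotopy statement to the elementary fact that a path followed by its own reverse is nullhomotopic rel endpoints, using Lemma~\ref{prefalse} to swap $R_s$ for $R_{s'}$ inside the concatenation $L_{s,s'}=R_s\ast\bar R_{s'}$. Concretely, since $L_{s,s'}\simeq R_{s'}\ast\bar R_{s'}$ and $R_{s'}\ast\bar R_{s'}\simeq c''$, concatenating homotopies gives $L_{s,s'}\simeq c''$, all within $X'=\mathbb{R}^2-(0,0)$.

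I would carry this out in four short steps. First, note that for \emph{every} $s''\in(0,1)$ the ratio path $R_{s''}$ has the same endpoints: $R_{s''}(0)=(-1,0)$ and $R_{s''}(1)=(0,-1)$. Hence the homotopy $F'_{s,s'}(\cdot,T)=R_{s(1-T)+s'T}$ built in the proof of Lemma~\ref{prefalse} fixes both endpoints for all $T$, so under the standing hypothesis it is a path homotopy \emph{rel endpoints} inside $X'$; thus $R_s\simeq R_{s'}$ rel $\{(-1,0),(0,-1)\}$ in $X'$. Second, since path homotopy rel endpoints is compatible with concatenation, applying this homotopy on the first factor of $L_{s,s'}=R_s\ast\bar R_{s'}$ gives $L_{s,s'}\simeq R_{s'}\ast\bar R_{s'}$ as loops based at $(-1,0)$, with the homotopy still taking place in $X'$ (the homotopy of Lemma~\ref{prefalse} never meets $(0,0)$). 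Third, invoke the standard contraction of $R_{s'}\ast\bar R_{s'}$ to the constant path $c''$; the usual such homotopy has image contained in $\Ima R_{s'}$, which avoids $(0,0)$ by hypothesis, so it lives in $X'$. Fourth, compose the two homotopies to obtain $L_{s,s'}\simeq c''$ rel basepoint in $X'$, and observe that a based nullhomotopy is in particular a free homotopy to the constant loop, which is exactly the assertion (and conversely a loop is freely nullhomotopic iff it is nullhomotopic in the based sense, so the two readings of $\simeq$ agree here).

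I do not expect a genuine obstacle; the only care needed is bookkeeping. The two points to verify are that the homotopy of Lemma~\ref{prefalse} really is rel endpoints — which holds precisely because all the $R_{s''}$ share the endpoints $(-1,0)$ and $(0,-1)$ — and that the standard cancellation homotopy has image inside $X'$, which holds because it is supported on $\Ima R_{s'}$. Note also that the hypothesis that $R_s$ avoids $(0,0)$ for \emph{all} $s\in(0,1)$ (not just $s,s'$) is what is consumed, via its use in Lemma~\ref{prefalse}.
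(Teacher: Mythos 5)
Your proposal is correct and follows essentially the same route as the paper: invoke Lemma~\ref{prefalse} to replace $R_s$ by $R_{s'}$ in the concatenation, then cancel $R_{s'}\ast\bar{R}_{s'}$ to $c''$, all inside $X'$. The extra bookkeeping you supply (the shared endpoints $(-1,0)$ and $(0,-1)$ making the homotopy rel endpoints, and the cancellation homotopy being supported on $\Ima R_{s'}$) just makes explicit what the paper leaves implicit.
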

\begin{proof}
From Lemma~\ref{prefalse}, $R_{s} \simeq R_{s'}$ if $R_{s}$ does not contain $(0,0)$ for any $s \in (0,1)$. Then, $L_{s,s'} = R_{s} * \bar{R}_{s'} \simeq R_{s'} * \bar{R}_{s'} \simeq c''$. 
\end{proof}
\begin{Lemma}\label{negativex}
There exists some $s_{1} \in (0,1)$ such that $R_{s_{1}} \subset \mathbb{R}_{\leq 0} \times \mathbb{R}$.
\end{Lemma}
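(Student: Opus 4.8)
The plan is to take $s_1$ to be a point at which the function $u \mapsto \|\gamma(u) - o\|$ attains its maximum over $[0,1]$, and then to read off directly from the definition of $R_{s_1}$ that its first coordinate is everywhere nonpositive.

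First I would introduce $\varrho \colon [0,1] \to \mathbb{R}_{\geq 0}$ defined by $\varrho(u) = \|\gamma(u) - o\|$. This is continuous because $\gamma$ is, and it is not identically zero because $\gamma$ parametrises a (nonconstant) Jordan curve. Since $\gamma(0) = \gamma(1) = o$, we have $\varrho(0) = \varrho(1) = 0$, so the maximum of $\varrho$ over the compact interval $[0,1]$ is a strictly positive value that is attained at some $s_1 \in (0,1)$. In particular $\varrho(s_1) > 0$, i.e.\ $\gamma(s_1) \neq o$, so $R_{s_1}$ is well defined.

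Next I would unwind the definitions. Recall $\phi_{s_1}(t) = s_1 t$, so $\gamma \circ \phi_{s_1}(t) = \gamma(s_1 t)$ with $s_1 t \in [0, s_1] \subseteq [0,1]$ for every $t \in [0,1]$. Hence
\[
r_{1,s_1}(t) = \frac{\|\gamma(s_1 t) - o\|}{\|\gamma(s_1) - o\|} = \frac{\varrho(s_1 t)}{\varrho(s_1)} \leq 1,
\]
the inequality being exactly the maximality of $\varrho(s_1)$. Therefore $r_{1,s_1}(t) - 1 \leq 0$ for all $t \in [0,1]$, which is precisely the statement that $R_{s_1}(t) = (r_{1,s_1}(t) - 1, r_{2,s_1}(t) - 1) \in \mathbb{R}_{\leq 0} \times \mathbb{R}$ for all $t$; that is, $R_{s_1} \subset \mathbb{R}_{\leq 0} \times \mathbb{R}$.

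I do not expect a genuine obstacle here: the only points requiring a moment's care are that $s_1$ can be chosen in the \emph{open} interval $(0,1)$ and that $\varrho(s_1) \neq 0$ (so the denominator defining $r_{1,s_1}$ is legitimate), and both follow from $\varrho$ being a nonzero continuous function vanishing at both endpoints. Note also that the strong local monotonicity hypothesis on $J$ at $o$ plays no role in this lemma; it will only be needed later (for the analogue producing $s_2$ and for controlling the winding number of $L_{s_1,s_2}$).
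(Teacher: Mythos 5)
Your proof is correct and is essentially the paper's own argument: choose $s_{1}$ maximising $\|\gamma(\cdot)-o\|$, so that $r_{1,s_{1}}(t)\leq 1$ and hence the first coordinate of $R_{s_{1}}(t)$ is nonpositive. The extra remarks (that $s_{1}$ can be taken in the open interval since $\gamma(0)=\gamma(1)=o$, and that the denominator is nonzero) are welcome details the paper leaves implicit, but they do not change the approach.
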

\begin{proof}
Let $s_{1} \in (0,1)$ be such that $\| \gamma(s_{1}) - o \|$ is maximal. Then, by definition, $\| \gamma(t) - o \| \leq \| \gamma(s_{1}) - o \|$ for all $t \in [0,1]$, and thus $r_{1,s_{1}}(t) = \frac{\| \gamma \circ \phi_{s_{1}}(t) - o \|}{\| \gamma(s_{1}) - o \|} \leq 1$. Let $a_{1}: \mathbb{R}^{2} \rightarrow \mathbb{R}$ map each point to its first component. Then, $a_{1} \circ R_{s_{1}}(t) = r_{1,s_{1}}(t) - 1 \leq 0$, and thus $R_{s_{1}} \subset \mathbb{R}_{\leq 0} \times \mathbb{R}$.
\end{proof}

\begin{Lemma}\label{positivequads}
Let $J$ be strongly locally monotone at $o$. There exists some $s_{2} \in (0,1)$ such that $R_{s_{2}} \cap (\mathbb{R}_{<0} \times \mathbb{R}_{<0}) = \varnothing$.
\end{Lemma}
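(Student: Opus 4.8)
The plan is to recast the statement as a geometric avoidance problem. Unwinding the definitions, $R_s(t)\in\mathbb{R}_{<0}\times\mathbb{R}_{<0}$ holds precisely when $\|\gamma(st)-o\|<\rho_s$ and $\|\gamma(st)-\gamma(s)\|<\rho_s$, where $\rho_s:=\|\gamma(s)-o\|$; equivalently, precisely when the point $\gamma(st)$ lies in the open ``lens'' $\Lambda_s:=\mathrm{int}\,B(o,\rho_s)\cap\mathrm{int}\,B(\gamma(s),\rho_s)$. So it suffices to produce a single $s_2\in(0,1)$ for which the entire arc $\gamma\circ\phi_{s_2}$ (whose image is $\gamma([0,s_2])$) misses $\Lambda_{s_2}$. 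Fixing a strongly locally monotone neighbourhood $\gamma\mid_{(1-\epsilon,\epsilon)}$ of $o$, I would search for such an $s_2$ on the ``incoming'' half of this neighbourhood, i.e.\ with $s_2\in(1-\epsilon,1)$ and $\gamma(s_2)$ very close to $o$.

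The portion of the arc lying away from $o$ is handled by compactness. Since $\{o\}$ and $\gamma([\epsilon,1-\epsilon])$ are disjoint compact sets, Lemma~\ref{Comp} gives an $m>0$ with $\|\gamma(u)-o\|\ge m$ for all $u\in[\epsilon,1-\epsilon]$. Taking $s_2$ close enough to $1$ that $\rho_{s_2}<m$, every such $\gamma(u)$ lies outside $B(o,\rho_{s_2})$, hence outside $\Lambda_{s_2}$; so only the two pieces of the arc inside the neighbourhood, namely $\gamma([0,\epsilon))$ and $\gamma((1-\epsilon,s_2])$, remain to be checked.

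For these pieces I would use the monotone hypothesis together with the identity, obtained by expanding $\|\gamma(u)-p\|^2$ and using $\|p-o\|=\rho_{s_2}$ (here $p:=\gamma(s_2)$), that $\gamma(u)\in\mathrm{int}\,B(p,\rho_{s_2})$ iff $a_p(u)>\tfrac12\|\gamma(u)-o\|$, while $\gamma(u)\in\mathrm{int}\,B(o,\rho_{s_2})$ iff $\|\gamma(u)-o\|<\rho_{s_2}$; thus a neighbourhood point lies in $\Lambda_{s_2}$ only if both inequalities hold. On the incoming piece $\gamma((1-\epsilon,s_2])$, monotonicity of $a_p$ on $(1-\epsilon,\epsilon)$ together with the Cauchy--Schwarz bound $a_p\le\rho_{s_2}$ and the endpoint value $a_p(s_2)=\rho_{s_2}$ should force $\|\gamma(u)-o\|\ge\rho_{s_2}$ there, so the first inequality fails. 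On the outgoing piece $\gamma([0,\epsilon))$ one wants instead $a_p(u)\le 0$ (so that $2a_p(u)-\|\gamma(u)-o\|<0$), which amounts to the outgoing directions at $o$ pointing away from the half-space containing $p$; granting this, the second inequality fails. Combining the three regions yields $\gamma([0,s_2])\cap\Lambda_{s_2}=\varnothing$, i.e.\ $R_{s_2}\cap(\mathbb{R}_{<0}\times\mathbb{R}_{<0})=\varnothing$.

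The main obstacle is the analysis of the outgoing piece $\gamma([0,\epsilon))$: one must rule out this part of $\gamma$ bending back towards $p$ into the (tiny) lens $\Lambda_{s_2}$, and it is exactly here that strong local monotonicity at $o$ must be used — the cusp-type behaviour excluded in Figure~\ref{sharpedge} is what would otherwise drive $R_{s_2}$ through the open third quadrant for every choice of $s_2$. Turning the heuristic ``$a_p(u)\le 0$ on the outgoing piece'' into a rigorous estimate, and correctly treating the borderline case in which $a_p$ is only weakly monotone, is where the real work of the proof will lie.
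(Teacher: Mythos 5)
Your strategy coincides with the paper's: rewrite $R_{s_2}(t)\in\mathbb{R}_{<0}\times\mathbb{R}_{<0}$ as membership of $\gamma(s_2t)$ in the open lens $B'(o,\rho_{s_2})\cap B'(\gamma(s_2),\rho_{s_2})$, split $\gamma([0,s_2])$ into the far part $J-U$ and the two arcs of the strongly monotone neighbourhood, dispose of the far part by compactness, and invoke strong local monotonicity for the two remaining arcs. Up to the choice of constants the far-part step is fine (the paper uses $d=\inf(\{o\},J-U)/3$ and checks both balls; your single-ball version is equivalent). The problem is that for the two neighbourhood arcs --- which, as you note yourself, is where the hypothesis has to do all the work --- the proposal contains no proof, and neither of the two mechanisms you sketch can be made to function.

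For the incoming arc you want to deduce $\|\gamma(u)-o\|\ge\rho_{s_2}$ from monotonicity of $a_p$, the Cauchy--Schwarz bound $a_p\le\rho_{s_2}$, and the endpoint value $a_p(s_2)=\rho_{s_2}$. But $a_p(u)$ is the inner product of the \emph{unit} vector in the direction of $\gamma(u)-o$ with $p-o$: it carries only directional information about $\gamma(u)-o$ and can never bound the magnitude $\|\gamma(u)-o\|$ from below, so this deduction is a non sequitur. For the outgoing arc you need $a_p(u)\le 0$, i.e.\ that $\gamma$ leaves $o$ into the half-space pointing away from $p$; this is simply not implied by strong local monotonicity. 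Take the two-segment corner of Figure~\ref{sharpdia} with an acute opening angle $\alpha$ between the branches: it is strongly locally monotone (each $a_p$ is a monotone step function), yet for $p=\gamma(s_2)$ on the incoming branch the outgoing point $o+\mu\hat z$ (with $\hat z$ the outgoing unit direction and $0<\mu<\min(\rho_{s_2},\,2\rho_{s_2}\cos\alpha)$) lies in both open balls, so $R_{s_2}$ genuinely enters the open third quadrant. Hence for such curves the claim you are trying to establish about your chosen $s_2$ is false, and the gap cannot be closed by sharpening estimates; one would have to select $s_2$ by a different principle, which would in turn break the compactness treatment of the far part. (The paper's own handling of these two arcs is comparably informal and runs into the same acute-corner difficulty, but as submitted your argument stops exactly where the lemma's content begins.)
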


\begin{proof}
Let $\epsilon>0$ be such that $\gamma\mid_{(1 - \epsilon, \epsilon)} \colonequals U \subset J$ is a strongly locally monotone neighbourhood of $o$. Let $d \colonequals \frac{\inf(\{o\}, J - U)}{3}$, and let $s_{2} \in (0,\epsilon)$ be such that $\| \gamma(s_{2}) - o \| = d$. Note that $J - U \subset \gamma\mid_{[0,s_{2}]}$. Also, let $B'(o,r)$ denote the open $n$-ball of radius $r$ centred at $o$.

Then, $R_{s_{2}}(t) \in \mathbb{R}_{<0} \times \mathbb{R}_{<0}$ if and only if $\| \gamma(t) - o \| < d$ and $\| \gamma(t) - \gamma(s_{2}) \| < d$ for all $t \in [0,1]$, and thus, if and only if $\gamma \circ \phi_{s_{2}} \cap B \neq \varnothing$, where  $B \colonequals B'(o,d) \cap B'(\gamma(s_{2}),d)$. As such, it shall suffice to show that $\gamma \circ \phi_{s_{2}} \cap B = \varnothing$ to complete the proof. We consider three parts of $\gamma \circ \phi_{s_{2}}$ separately:$\gamma \circ \phi_{s_{2}}\mid_{[0, \frac{\epsilon}{s_{2}}]}$, $\gamma \circ \phi_{s_{2}}\mid_{[\frac{\epsilon}{s_{2}}, \frac{1 - \epsilon}{s_{2}}]} = J - U$, and $\gamma \circ \phi_{s_{2}}\mid_{[\frac{1 - \epsilon}{s_{2}}, 1]}$. 
\begin{itemize}
\item $\gamma \circ \phi_{s_{2}}\mid_{[0, \frac{\epsilon}{s_{2}}]}:$ Since (without a loss of generality) $a_{\gamma(s_{2})}$ is monotonically increasing, the starting point $o$ of $\gamma \circ \phi_{s_{2}}\mid_{[0, \frac{\epsilon}{s_{2}}]}$ is its lowest point. Thus, since $a_{\gamma(s_{2})}(o) > a_{\gamma(s_{2})}(b)$ for any $b \in B$, no point of $\gamma \circ \phi_{s_{2}}\mid_{[0, \frac{\epsilon}{s_{2}}]}$ lies close enough to the axis defined by $\vv{o\gamma(s_{2})}$ to intersect $B$. See Figure~\ref{monotone} for a diagram that illustrates this.
\begin{figure}
         \centering
         \resizebox{0.6\textwidth}{!}{\input{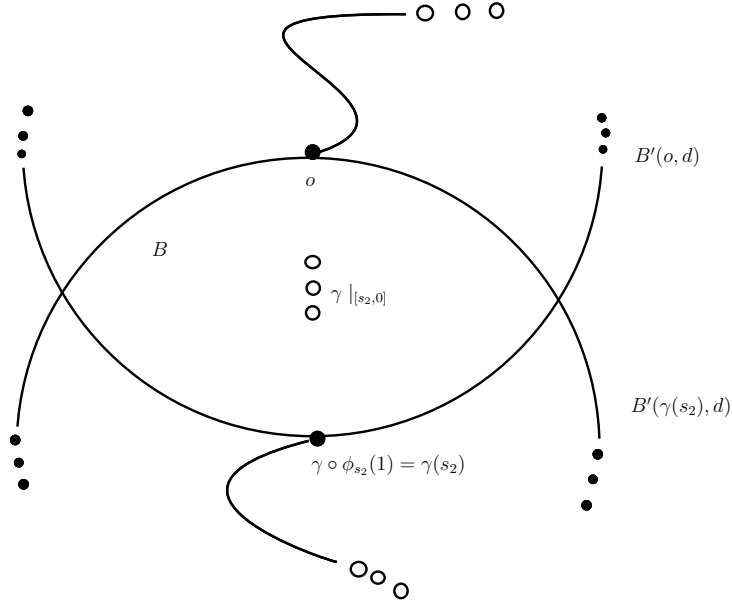}}
         \caption{No point of $\gamma \circ \phi_{s_{2}}\mid_{[0, \frac{\epsilon}{s_{2}}]}$ lies low enough to intersect $B$, and no point of $\gamma \circ \phi_{s_{2}}\mid_{[\frac{1 - \epsilon}{s_{2}}, 1]}$ is high enough to intersect $B$.}
         \label{monotone}
     \end{figure}
     
\item $\gamma \circ \phi_{s_{2}}\mid_{[\frac{\epsilon}{s_{2}}, \frac{1 - \epsilon}{s_{2}}]} = J - U$: Since  $d < \inf(\{o\},J - U)$, it follows that $(J - U) \cap B'(o,d) = \varnothing$. Then
$$\inf(\{\gamma(s_{2})\}, J- U) \geq \| \inf(\{o\}, J - U) \| - \| \gamma(s_{2}) - o \| = 3d - d = 2d.$$ Since $d < 2d \leq \inf(\gamma(s_{2}), J - U)$, it follows that $(J - U) \cap B'(\gamma(s_{2}),d) = \varnothing$. Therefore, $J - U$ intersects neither $B'(o,d)$ nor $B'(\gamma(s_{2}),d)$, and in particular, $(J - U) \cap B = \varnothing$. 

\item $\gamma \circ \phi_{s_{2}}\mid_{[\frac{1 - \epsilon}{s_{2}}, 1]}$: Since (without a loss of generality) $a_{\gamma(s_{2})}$ is monotonically increasing, the end point $\gamma \circ \phi_{s_{2}}(1) = \gamma(s_{2})$ of $\gamma \circ \phi_{s_{2}}\mid_{[\frac{1 - \epsilon}{s_{2}}, 1]}$ is its ``highest'' point. That is to say, $a_{s_{2}} \circ \gamma(s_{2}) > a_{\gamma(s_{2})} \circ \gamma \circ \phi_{s_{2}}\mid_{[\frac{1 - \epsilon}{s_{2}}, 1]}(t)$ for any $t$. Thus, since $a_{\gamma(s_{2})}(o) < a_{\gamma(s_{2})}(b)$ for any $b \in B$ and any $t$, $a_{\gamma_{s_{2}}} \circ \gamma \circ \phi_{s_{2}}\mid_{[0, \frac{\epsilon}{s_{2}}]}(t) < a_{\gamma(s_{2})}(b)$. In other words, no point of $\gamma \circ \phi_{s_{2}}\mid_{[0, \frac{\epsilon}{s_{2}}]}$ lies high enough (relative to the axis defined by $\vv{o\gamma(s_{2})}$) to intersect $B$. Again, see Figure~\ref{monotone}.
\end{itemize}

Therefore, $\gamma \circ \phi_{s_{2}} \cap B = \varnothing$, and thus $R_{s_{2}} \cap (\mathbb{R}_{<0} \times \mathbb{R}_{<0}) = \varnothing$.
\end{proof}
\begin{Lemma}\label{true}
 If $R_{s}$ doesn't contain $(0,0)$ for any $s \in (0,1)$, then $L_{s_{1},s_{2}} \not\simeq c''$ in $X$.
\end{Lemma}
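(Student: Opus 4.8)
The plan is to compute the winding number of $L_{s_1,s_2}$ about the origin and show it is nonzero; exactly as in the proof of Lemma~\ref{Flatnothom}, a loop in $\mathbb{R}^2-(0,0)$ is (freely) homotopic to the constant loop if and only if this winding number vanishes. (I read the ambient space in the statement as $X'=\mathbb{R}^2-(0,0)$, consistent with Lemmata~\ref{prefalse} and~\ref{false}.) By the standing hypothesis, $R_s$ avoids $(0,0)$ for every $s\in(0,1)$, so $\Ima R_{s_1}$, $\Ima R_{s_2}$, and hence $\Ima L_{s_1,s_2}$, all miss $(0,0)$, and $\eta_{(0,0)}$ is defined on each of these paths. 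Using additivity of $\eta_{(0,0)}$ under concatenation together with $\eta_{(0,0)}(\bar g)=-\eta_{(0,0)}(g)$,
\[
\eta_{(0,0)}(L_{s_1,s_2})=\eta_{(0,0)}(R_{s_1})+\eta_{(0,0)}(\bar R_{s_2})=\eta_{(0,0)}(R_{s_1})-\eta_{(0,0)}(R_{s_2}).
\]
Recall that $R_{s_1}$ and $R_{s_2}$ are both paths from $R_s(0)=(-1,0)$ to $R_s(1)=(0,-1)$, so it remains to pin down each of $\eta_{(0,0)}(R_{s_1})$ and $\eta_{(0,0)}(R_{s_2})$.

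For $R_{s_1}$: Lemma~\ref{negativex} gives $\Ima R_{s_1}\subset(\mathbb{R}_{\le 0}\times\mathbb{R})\setminus\{(0,0)\}$, which is exactly the closed angular sector of nonzero points with argument in $[\tfrac{\pi}{2},\tfrac{3\pi}{2}]$. Since this sector has angular width $\pi<2\pi$, its preimage under the covering $\mathbb{R}\to S^1$ is a disjoint union of closed length-$\pi$ intervals separated by length-$\pi$ gaps, so any continuous branch $\tilde\theta$ of the argument of $R_{s_1}$ is trapped inside a single such interval; as $\tilde\theta(0)=\pi$ (for $(-1,0)$) and the endpoint $(0,-1)$ forces $\tilde\theta(1)=\tfrac{3\pi}{2}$, we get $\eta_{(0,0)}(R_{s_1})=\tfrac{3\pi/2-\pi}{2\pi}=\tfrac14$. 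For $R_{s_2}$: Lemma~\ref{positivequads} (which is where the hypothesis that $J$ is strongly locally monotone at $o$ is used) gives that $\Ima R_{s_2}$ misses the open third quadrant $\mathbb{R}_{<0}\times\mathbb{R}_{<0}$, and by hypothesis it also misses $(0,0)$, so $\Ima R_{s_2}$ lies in the closed sector of nonzero points with argument in $[-\tfrac{\pi}{2},\pi]$ (the complement of the open third-quadrant sector with argument in $(\pi,\tfrac{3\pi}{2})$). This sector has width $\tfrac{3\pi}{2}<2\pi$, so the same covering-space argument traps a continuous branch of the argument of $R_{s_2}$ in one interval; with $\tilde\theta(0)=\pi$ and $\tilde\theta(1)=-\tfrac{\pi}{2}$ (the representative of $(0,-1)$ in this sector), $\eta_{(0,0)}(R_{s_2})=\tfrac{-\pi/2-\pi}{2\pi}=-\tfrac34$.

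Putting the pieces together, $\eta_{(0,0)}(L_{s_1,s_2})=\tfrac14-\bigl(-\tfrac34\bigr)=1\neq 0$, so $L_{s_1,s_2}\not\simeq c''$ in $X'$, which is the claim. The one step that needs genuine care is the angular confinement: one must justify that the net angle swept by $R_{s_1}$ (resp.\ $R_{s_2}$) is exactly $\tfrac{\pi}{2}$ (resp.\ $-\tfrac{3\pi}{2}$), and not that value plus a spurious multiple of $2\pi$. This is precisely the role of the two preceding lemmata: they confine each path to a sector of angular width $<2\pi$, which makes the lifting argument go through. Equivalently, each of the sets $(\mathbb{R}_{\le 0}\times\mathbb{R})\setminus\{0\}$ and $\mathbb{R}^2\setminus\bigl((\mathbb{R}_{<0}\times\mathbb{R}_{<0})\cup\{0\}\bigr)$ is simply connected, being star-shaped about an interior point (take $(-1,0)$ and $(1,1)$ respectively), so $R_{s_i}$ is homotopic rel endpoints inside it to the obvious circular arc joining $(-1,0)$ to $(0,-1)$, whose swept angle is immediate; the remaining manipulations just duplicate the winding-number bookkeeping already done for Lemma~\ref{Flatnothom}.
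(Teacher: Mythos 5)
Your proof is correct and follows essentially the same route as the paper: it establishes $L_{s_1,s_2}\not\simeq c''$ by computing the winding number about the origin as the sum of the angular contributions of the two halves, pinned down via Lemmata~\ref{negativex} and~\ref{positivequads}, giving $\tfrac{\pi}{2}+\tfrac{3\pi}{2}=2\pi$ and hence winding number $1\neq 0$. The only differences are cosmetic: you work with $\eta_{(0,0)}(R_{s_2})$ and negate rather than with $\bar{R}_{s_2}$ directly, and your covering-space/star-shaped-sector justification makes rigorous the paper's informal ``either $\tfrac{\pi}{2}$ or $-\tfrac{3\pi}{2}$'' step.
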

\begin{proof}
 Since neither $R_{s_{1}}$ nor $R_{s_{2}}$ contain the origin, $L_{s_{1},s_{2}}$ has a well-defined winding number around $(0,0)$. We show that the winding number $\eta(L_{s_{1},s_{2}})$ of $L_{s_{1},s_{2}}$ around $(0,0)$ is 1. As $\eta(c'') = 0$, and as the winding number of a curve is a homotopy invariant, demonstrating this shall be sufficient for this proof.
 
 For any $t \in [0,1]$, let $\theta(t)$ be equal to the angle of $L_{s_{1},s_{2}}(t)$ relative to the $x$-axis. To compute $\eta(L_{s_{1},s_{2}})$, we first find $\theta(\frac{1}{2}) - \theta(0)$ and $\theta(1) - \theta(\frac{1}{2})$.
 
\begin{itemize}
    \item $\theta(\frac{1}{2}) - \theta(0)$: Since $L_{s_{1},s_{2}}\mid_{[0,\frac{1}{2}]} = R_{s_{1}} \subset \mathbb{R}_{\leq 0} \times \mathbb{R}$ (from Lemma~\ref{negativex}), $\| \theta(\frac{1}{2}) - \theta(0) \| \not\geq 1$, because that would require $L_{s_{1},s_{2}}$ to pass through all 4 quadrants.
    
    Then, as $R_{s_{2}}$ goes from $(-1,0)$ to $(0,-1)$, $\theta(\frac{1}{2}) - \theta(0)$ can either be $\frac{\pi}{2}$ or $-\frac{3\pi}{2}$. However $\theta(\frac{1}{2}) - \theta(0) = -\frac{3\pi}{2}$, only if $L_{s_{1},s_{2}}\mid_{[0,\frac{1}{2}]}$ passes through $\mathbb{R}_{>0} \times \mathbb{R}$, in contradiction to Lemma~\ref{negativex}. Therefore, $\theta(\frac{1}{2}) - \theta(0) \neq -\frac{3\pi}{2}$, and thus $\theta(\frac{1}{2}) - \theta(0) = \frac{\pi}{2}$. See Figure~\ref{clockwise} for a diagram illustrating this.

\begin{figure}
    \begin{subfigure}{0.43\textwidth}
\resizebox{0.9\linewidth}{!}{\input{Clockwise.tex}} 
\caption{$\theta(\frac{1}{2}) - \theta(0) = -\frac{3\pi}{2}$ only if $L_{s_{1},s_{2}}\mid_{[0,\frac{1}{2}]}$ passes through $\mathbb{R}_{>0} \times \mathbb{R}$.}
\label{clockwise}
\end{subfigure}
\hfill
    \begin{subfigure}{0.45\textwidth}
\resizebox{0.9\linewidth}{!}{\input{anticlockwise.tex}} 
\caption{$\theta(1) - \theta(\frac{1}{2}) = -\frac{\pi}{2}$ only if $L_{s_{1},s_{2}}\mid_{[\frac{1}{2},1]}$ passes through $\mathbb{R}_{<0} \times \mathbb{R}_{<0}$.}\hfill
\label{anticlockwise}
\end{subfigure}
\hfill
\end{figure}
    
    \item $\theta(1) - \theta(\frac{1}{2})$: Since $L_{s_{1},s_{2}}\mid_{[\frac{1}{2},1]} = \bar{R}_{s_{2}} \not\subset \mathbb{R}_{< 0} \times \mathbb{R}_{<0}$ (from Lemma~\ref{positivequads}), $\| \theta(1) - \theta(\frac{1}{2}) \| \not\geq 1$, because that would again require $L_{s_{1},s_{2}}\mid_{[0,\frac{1}{2}]}$ to pass through all 4 quadrants.
    
    Then, as $L_{s_{1},s_{2}}\mid_{[0,\frac{1}{2}]}$ goes from $(0,-1)$ to $(-1,0)$, $\theta(1) - \theta(\frac{1}{2})$ can either be $\frac{\pi}{2}$ or $-\frac{3\pi}{2}$. However, $\theta(1) - \theta(\frac{1}{2}) = -\frac{\pi}{2}$ only if $L_{s_{1},s_{2}}\mid_{[\frac{1}{2},1]}$ travels through $\mathbb{R}_{<0} \times \mathbb{R}_{<0}$, in contradiction to Lemma~\ref{positivequads}. Therefore, $\theta(1) - \theta(\frac{1}{2}) \neq -\frac{\pi}{2}$, and thus $\theta(1) - \theta(\frac{1}{2}) = \frac{3\pi}{2}$. See Figure~\ref{anticlockwise} for a diagram illustrating this.
\end{itemize}
Hence,

$$\eta(L_{s_{1},s_{2}}) = \frac{\theta(1) - \theta(0)}{2\pi} = \frac{(\theta(1) - \theta(\frac{1}{2})) + (\theta(\frac{1}{2}) - \theta(0))}{2\pi} = \frac{\frac{\pi}{2} + \frac{3\pi}{2}}{2\pi} = 1.$$

Then, as discussed above, it follows that $L_{s_{1},s_{2}} \not\simeq c''$ in $X'$.
\end{proof}
We now conclude with the proof of Theorem~\ref{General}.
\begin{proof}[Proof of Theorem~\ref{General}]
 If $R_{s}$ doesn't contains $(0,0)$ for any $s\in (0,1)$, then Lemmata~\ref{false} and~\ref{true} tell us respectively that $L_{s_{1},s_{2}} \simeq c''$ in $X'$ and that $L_{s_{1},s_{2}} \not\simeq c''$, which is a contradiction. Therefore, our assumption for these lemmata is false, so there exists some $s'$ and $t'$ such that $R_{s'}(t') = (0,0)$. This gives us two points---namely, $\gamma(s')$ and $\gamma(t')$---that, by the construction of $R_{s}$, inscribe an equilateral triangle such that one of its vertices is $o$. 
\end{proof}

\section*{Acknowledgements}

We would like to thank Rachana Madhukara for helpful comments.

\bibliographystyle{alpha}
\bibliography{inscribed}

\end{document}